\documentclass[11pt]{article}

\usepackage{amsfonts,color}
\usepackage{amsmath,amssymb}
\usepackage{epsfig}
\usepackage{lscape}
\usepackage{amssymb}
\usepackage{graphicx}
\usepackage[utf8]{inputenc}	
\usepackage{amsthm,amssymb}
\usepackage{amsfonts}
\usepackage[english]{babel}
\usepackage{dsfont}
\usepackage{bbm}
\usepackage[T1]{fontenc}
\usepackage{colonequals}
\usepackage{geometry}
\usepackage{mathtools}
\usepackage{csquotes}
\usepackage{url}
\usepackage{lipsum}
\usepackage{array}
\usepackage[utf8]{inputenc}

\usepackage{amsmath,amssymb,amsfonts,mathtools}
\usepackage{mathrsfs}
\usepackage{bm}
\usepackage{enumitem}
\usepackage{bbm}

 \usepackage[percent]{overpic}

\usepackage{color}
\usepackage{amsmath}
\usepackage{graphicx}
\usepackage{footnote}
\usepackage{pgfplots}
\pgfplotsset{compat=1.18}
\usepackage{xcolor}
\usepackage{amsfonts,color}
\usepackage{amsmath,amssymb}
\usepackage{mathtools}
\usepackage{epsfig}
\usepackage{amssymb} 
\usepackage{mathtools}
\usepackage{amsthm,wasysym}
\usepackage[english]{babel}
\usepackage{cancel}
\usepackage{colonequals}
\makeatletter
\usepackage{float}
\usepackage{wrapfig}
\usepackage{nicefrac,enumitem}
\usepackage{hyperref}
\hypersetup{hidelinks}

\usepackage{fixltx2e}
\usepackage{tikz}
\usetikzlibrary{arrows.meta,positioning}
\restylefloat{figure}

\makeatletter
\let\@fnsymbol\@arabic
\makeatother
\usepackage{caption}
\usepackage{float}

\usepackage{bbm}
\newcommand{\id}{{\boldsymbol{\mathbbm{1}}}}

\newcommand{\tr}{{\rm tr}}
\newcommand{\dev}{{\rm dev}}
\newcommand{\sym}{{\rm sym}}
\newcommand{\skw}{{\rm skew}}

\newcommand{\Curl}{{\rm Curl}}

\def\dd{\displaystyle}

\newtheorem{theorem}{Theorem}[section]

\newtheorem{proposition}[theorem]{Proposition}

\newtheorem{counterexample}[theorem]{Counterexample}

\newcommand{\todo}[1][]{TODO\ifx&#1&\else: #1\fi}

\def\dd{\displaystyle}

\usepackage{multicol}
\usepackage[font=small]{caption}
\newcommand{\citet}[2][]{\citeauthor{#2} \cite[#1]{#2}}
\RequirePackage{amsmath}	
\RequirePackage{mathtools}	
\newcommand{\R}{\mathbb{R}}

\DeclareMathOperator{\Cof}{Cof}

\newcommand{\pdd}[3][]{\frac{\partial\ifx&#1&\else^{#1}\fi #2}{\partial #3}}

\DeclareMathOperator{\@macros@div}{div}
\renewcommand{\div}{\@macros@div}

\providecommand{\availableaturl}[2][]{%
	available at \url{#2}%
}

\makeatletter%
\@ifpackageloaded{hyperref}{}{}%
\makeatother%

\usepackage{color}
\usepackage{amsmath}

\usepackage{footnote}
\usepackage{amsfonts,color}
\usepackage{amsmath,amssymb}
\usepackage{amssymb} 
\usepackage{mathtools}
\usepackage{amsthm,wasysym}
\usepackage{framed}
\usepackage{cancel}
\usepackage{graphicx}
\usepackage{caption}
\usepackage{subcaption}
\usepackage{rotating}

\usepackage[english]{babel}
\usepackage[utf8]{inputenc}
\makeatletter
\usepackage{float}
\usepackage{wrapfig}
\restylefloat{figure}

\makeatletter
\let\@fnsymbol\@arabic
\makeatother

\usepackage{bbm}

\def\dd{\displaystyle}

\def\barr{\begin{array}}
	\usepackage{lscape}

	\def\earr{\end{array}}
\def\bec#1{\begin{equation}\label{#1}}
	\def\becn{\begin{equation*}}
		\def\endec{\end{equation}}
	\def\endecn{\end{equation*}}
\def\dd{\displaystyle}
\def\bfm#1{\mbox{\boldmat}}

\renewcommand{\dd}{\displaystyle}

\usepackage{enumitem}

\usepackage{pifont}

\tikzset{
	state/.style={
		rectangle,
		rounded corners,
		draw=black, very thick,
		minimum height=2em,
		inner sep=6pt,
		text centered,
	}
}
\newcommand{\imin}[1][\lambda]{m{\ifx&#1&\else(#1)\fi}}
\newcommand{\imax}[1][\lambda]{M{\ifx&#1&\else(#1)\fi}}
\newcommand{\iset}[1][\lambda]{J{\ifx&#1&\else(#1)\fi}}
\allowdisplaybreaks[1]

\usepackage{titletoc}

\usepackage{tikz}
\usepackage{pgfplots}
\usepackage{subcaption}
\usetikzlibrary{arrows.meta,calc,positioning,patterns}
\pgfplotsset{compat=1.18}

\def\aval{0.55}
\pgfmathsetmacro{\detboundary}{(1-\aval)^2}

\usepackage{tikz}
\usepackage{pgfplots}
\usepackage{subcaption}

\usetikzlibrary{arrows.meta,calc,positioning}
\pgfplotsset{compat=1.18}

\def\aval{0.55}
\usepackage{hyperref}

\begin{document}

\title{Polyconvexity for Cosserat nonlinear elasticity and nonlinear couple--stress theory}

\author{Diana Ciotir\thanks{Diana Ciotir, Department of Mathematics, Alexandru Ioan Cuza University of Ia\c si, Blvd. Carol I, no. 11, 700506 Ia\c si, Romania, email: diaciotir@gmail.com.} \quad and \quad
Ionel-Dumitrel Ghiba\thanks{Corresponding author: Ionel-Dumitrel Ghiba, Department of Mathematics, Alexandru Ioan Cuza University of Ia\c si, Blvd. Carol I, no. 11, 700506 Ia\c si, Romania; and Octav Mayer Institute of Mathematics of the Romanian Academy, Ia\c si Branch, 700505 Ia\c si, Romania; email: dumitrel.ghiba@uaic.ro.} \quad and \quad
Patrizio Neff\thanks{Patrizio Neff, Chair for Nonlinear Analysis and Modelling, Faculty of Mathematics, University of Duisburg--Essen, Thea-Leymann-Str. 9, 45127 Essen, Germany; email: patrizio.neff@uni-due.de.}}

\maketitle

\begin{abstract}

We study a class of nonlinear elastic energies whose constitutive structure is naturally expressed in terms of a stretch variable but is not directly covered by the standard polyconvex theory formulated in the deformation gradient. The problem is lifted by introducing an independent microrotation $ \overline R \in {\rm SO}(3)$ and the relative stretch $ \overline U = \overline R ^T{\rm D}\varphi$. The curvature variable $  \overline R ^T\operatorname{Curl}  \overline R $ controls the full first-order variation of the rotation field and yields strong compactness of minimizing sequences.  The identities
$\operatorname{Cof}( \overline R^T{\rm D}\varphi)
 = \overline R^T\operatorname{Cof}{\rm D}\varphi$
and
$\det( \overline R^T{\rm D}\varphi)=\det{\rm D}\varphi$
then allow the minors of the lifted stretches to be identified through the
weak continuity of the corresponding minors of the deformation gradients.

We prove existence in two regimes. The first allows convex dependence on $(  \overline U ,\operatorname{Cof}  \overline U ,\det \overline U )$ and assumes separate cofactor coercivity. The second depends only on $( \overline U ,\det \overline U )$ and requires no independent cofactor bound. For the constrained model, the weakly closed condition $ \overline R ^T{\rm D}\varphi\in\operatorname{Sym}^{+}(3)$, together with $\det{\rm D}\varphi>0$, implies $ \overline R =R:=\operatorname{polar}({\rm D}\varphi)$ for every admissible pair. The lifted problem is therefore equivalent to a deformation problem containing the curvature of the polar factor. The resulting model is a selectively rotationally regularized nonlinear
elastic model of couple--stress type, rather than a purely first-gradient
Biot model.

\end{abstract}

\noindent\textbf{Keywords.} Nonlinear elasticity; Cosserat continuum; polyconvexity; Biot strain; polar decomposition; weak continuity of determinants; curvature of rotations; micropolar; couple--stress model.

\medskip

\noindent\textbf{2020 Mathematics Subject Classification.} 49J45, 74B20, 74A35, 74G25.

\begin{footnotesize}
\tableofcontents
\end{footnotesize}

\section{Introduction}

A central difficulty in nonlinear elasticity is the tension between mechanically natural strain measures and the structural hypotheses required by the direct method of the calculus of variations. For a deformation $\varphi:\Omega\to\mathbb R^3$, the classical polyconvex existence theory is formulated in terms of the deformation gradient $F={\rm D}\varphi$ and its minors $(F,\operatorname{Cof}F,\det F)$; see, in particular, \cite{Ball1976,Ball1977}. Polyconvexity is a convenient sufficient condition for weak lower semicontinuity, while a singular dependence on $\det F$ may enforce local orientation preservation by assigning infinite energy to $\det F\leq0$ and by producing blow-up as $\det F\to0^+$.

Several mechanically transparent energies are instead naturally written in terms of the right stretch
$
U=\sqrt{F^TF}\in {\rm Sym}^{++}(3).
$
The simplest example is the Biot strain $U-\id_3$. However, weak convergence of $F_k:={\rm D}\varphi_k$ does not imply useful weak convergence of
$
U_{k}=\sqrt{F_k^TF_k}.
$
Therefore, convexity or polyconvexity formulated directly in the {\it classical stretch} $U$ does not by itself provide the compactness mechanism needed in a first-gradient problem for $\varphi$ alone.

The present paper uses a Cosserat lifting of this difficulty. We introduce an independent microrotation $ \overline R :\Omega\to {\rm SO}(3)$. The {\it relative stretch} \footnote{In the entire paper we use the overline for $ \overline R$ and $ \overline U$ in order to mark that these matrices are not those given by the polar decomposition of $F={\rm D} \varphi$.}
$
 \overline U= \overline R^T{\rm D}\varphi\in \mathbb{R}^{3\times 3}  \  (\not \in {\rm Sym}(3), \ \text{in general}),
$
is the first Cosserat deformation tensor, while
$
\alpha= \overline R^T\operatorname{Curl} \overline R\in \mathbb{R}^{3\times 3}  
$
is a standard Cosserat \cite{Cosserat1896,Cosserat09,Cosserat09b} {\it curvature measure}.  { Integrability conditions relating the first and second Cosserat deformation
tensors were studied in \cite{LankeitNeffOsterbrink2017}. These relations
concern the compatibility between the two Cosserat tensors when the relative
deformation is invertible. In particular, for sufficiently regular fields,
$\Curl \,\overline U$ can be expressed algebraically in terms of $\overline  U$ and the rotational
curvature tensor. Moreover, the energy considered there already contains an
explicit volumetric contribution depending on $\det \overline U$. These results, however, do not provide the orientation-preserving weak
closure and the polar-factor recovery established below. Moreover, control
of $\Curl\, \overline U$ should not be confused with control of the full gradient
${\rm D}\overline U$, and therefore does not yield control of the full second gradient
${\rm D}^2\varphi$.}

For ${\rm SO}(3)$-valued tensor fields which belong to ${\rm W}^{1,q}(\Omega)$, $q\geq 2$, control of the row-wise Curl controls the full gradient of the rotation; this follows in the smooth setting from the Curl--Grad identity of Neff and M\"unch \cite{Neff_curl06} and since  smooth ${\rm SO}(3)$-valued maps are 
strongly dense in
$
{\rm W}^{1,q}(\Omega;{\rm SO}(3))
$
for every \(q\geq 2\), see  \cite{bousquet2017density,detaille2026complete} and \cite{brezis2015density,bethuel1991approximation}. Hence, for $\overline R\in  {\rm W}^{1,q}(\Omega; {\rm SO}(3))$, $q\geq 2$,  a bounded curvature energy yields weak ${\rm W}^{1,q}$-compactness and strong ${\rm L}^t$-compactness of $ \overline R _k$ for every finite $t$. Combined with weak convergence of ${\rm D}\varphi_k$, this gives
$
 \overline  R _k^T{\rm D}\varphi_k\rightharpoonup
  \overline R ^T{\rm D}\varphi.
$
The same strong--weak mechanism applies to the cofactor variable, since $\Cof \overline  R _k^T{\rm D}\varphi_k=\overline  R _k^T\Cof {\rm D}\varphi_k$, and to the
curvature strain. Direct-method existence results for geometrically nonlinear
Cosserat and, more generally,  micromorphic models with an independent microrotation were
established, among others, in
\cite{Neff_Habil04,NeffMicromorphic2006,NeffBirsan13,
neff2014existence,LankeitNeffOsterbrink2017}.

The first existence theorem for micromorphic nonlinear materials, including Cosserat nonlinear elasticity, was obtained by Neff \cite{Neff_Habil04,NeffMicromorphic2006}. Further existence and lower-semicontinuity results for nonlinear micropolar energies with independent deformation and microrotation fields were established by Tamba\v{c}a and Vel\v{c}i\'c \cite{TambacaVelcicExistence2010,TambacaVelcicSemicontinuity2010}. Their polynomial growth assumptions neither enforce $\det{\rm D}\varphi>0$ a.e. nor allow singular blow-up as $\det{\rm D}\varphi\to0^+$; consequently, they do not recover the microrotation as $\operatorname{polar}({\rm D}\varphi)$ or obtain the constraint Cosserat problem (couple--stress model \cite{Toupin62,sky2026cosserat}) considered here. Mariano and Modica developed a broader multifield existence theory in which the macroscopic deformation is a weak diffeomorphism, singular volumetric energies are allowed, and the density is polyconvex in the minors of ${\rm D}\varphi$ and convex in the full gradient of the substructural descriptor; they also discuss a Cartesian-current formulation involving all minors, including mixed minors, of the combined gradient $({\rm D}\varphi,{\rm D} \overline R)$ \cite{Mariano08a}. Their assumptions, however, do not require a convex representation in the specific objective variables
$
 \overline R^{T}F,$ $ \operatorname{Cof}( \overline R^{T}F),$ $
\det ( \overline R^{T}F),$ $ \overline R^{T}\operatorname{Curl} \overline R,
$
nor do they impose the closed constraint $ \overline R^{T}{\rm D}\varphi\in\operatorname{Sym}^{+}(3)$, recover the polar factor, or cover the reduced regime without independent cofactor coercivity. In our paper, the deliberate use of $\overline R^{T}\operatorname{Curl} \overline R$ instead of ${\rm D} \overline R$ still yields the required compactness for ${\rm SO}(3)$-valued fields and is particularly suited to future dimensional reductions towards Cosserat plate, shell, and rod models.
 
Complementary necessary conditions for minimizers in nonlinear Cosserat elasticity have also been investigated in terms of quasiconvexity, rank-one convexity and Legendre--Hadamard conditions; see, for instance, \cite{ShiraniSteigmann2022,Birsan2025Quasiconvexity}.

The basic lifting idea underlying the present approach is already contained
in Neff's habilitation thesis and in the subsequent finite-strain
micromorphic existence theory
\cite{Neff_Habil04,NeffMicromorphic2006}.  In these works, an independent microrotation or microdistortion and the
associated non-symmetric relative deformation are introduced in the
variational analysis of geometrically nonlinear generalized continua, and
the curvature contribution provides compactness for the coupled fields.
Thus, the use of an independent microrotation as an analytical lifting
device is not claimed here to be entirely new.

However, the argument in \cite{Neff_Habil04} is not formulated in terms of polyconvexity with respect to the lifted variables
$
\bigl(  \overline U ,\operatorname{Cof}  \overline U ,
\det  \overline U \bigr),
$
nor is the lifting mechanism isolated there as a general route from Cosserat models back to classical nonlinear elasticity. Some of the relevant ideas remain implicit or are distributed among different parts of the analysis and were not subsequently developed into the precise variational framework considered here.
The purpose of the present paper is therefore to extract this mechanism, formulate it systematically and provide a complete self-contained argument. In particular, we clarify the weak convergence of the lifted stretch and its minors, the weak sequential closedness of the positive-semidefinite constraint, the preservation of orientation, and the recovery of the polar factor. This makes it possible to formulate a lifted polyconvexity condition and to apply the resulting existence theory to broad classes of stretch-based energies, including regularized Biot-type energies, for which classical polyconvexity in the deformation gradient is not directly available.

The distinction from the general micropolar existence theory is also
essential. In the latter, the deformation and microrotation remain
independent unknowns, and no conclusion of the form
$
 \overline R=R:=\operatorname{polar}({\rm D}\varphi)
$
is obtained. By contrast, the present constrained formulation combines the
weakly closed condition\footnote{Here, $\operatorname{Sym}^{++}(3)$ denotes the set of positive-definite matrices, while $\operatorname{Sym}^{+}(3)=  \overline{\operatorname{Sym}^{++}(3)}$ denotes the set of positive-semidefinite matrices.}
$
 \overline R^T{\rm D}\varphi\in\operatorname{Sym}^{+}(3)
$
with the orientation-preserving condition
$
\det{\rm D}\varphi>0.
$
This yields $ \overline R=R:=\operatorname{polar}({\rm D}\varphi)$ for every admissible
pair and allows the independent Cosserat problem to be transferred
bijectively to a Cosserat constraint  problem (couple--stress model \cite{Toupin62,sky2026cosserat}).

It is important to state precisely what is and is not obtained. After polar recovery, the energy still contains the curvature of $\operatorname{polar}({\rm D}\varphi)$. Thus, the result is not an existence theorem for the non-regularized first-gradient Biot functional. It is an existence theorem for a model with selective rotational regularization: the variation of the polar rotation is controlled, whereas the variation of the stretch tensor is not. In particular, the curvature term does not yield control of the full second gradient $\mathrm D^2\varphi$. Moreover, the condition $\det{\rm D}\varphi>0$ is a local orientation condition and does not by itself imply global injectivity or exclude global interpenetration.

The choice of the curvature variable $ \overline R^{T}\operatorname{Curl}\overline R$, rather than the full gradient ${\rm D} \overline R$, is deliberate. It preserves the intrinsic Cosserat structure of the model and provides an objective measure of the spatial incompatibility of the microrotation \cite{LankeitNeffOsterbrink2017}. Here, for $\overline R\in {\rm W}^{1,q}(\Omega)$, the Curl operator is applied row-wise. 

The paper treats two complementary growth regimes. In the cofactor-dependent regime, the density is convex in $(  \overline U ,\operatorname{Cof}  \overline U ,\det  \overline U )$ and the energy controls the cofactor in ${\rm L}^r(\Omega)$. In the reduced regime, the density depends only on $(  \overline U ,\det  \overline U )$ and no separate cofactor coercivity is imposed. The main analytical point in the reduced case is that, in dimension three, $p>9/4$ allows one to combine the weak ${\rm L}^{p/2}$-convergence of the cofactors with the compact Sobolev convergence of the deformations in the Piola representation of the determinant.

The framework is more general than the uniformly convex quadratic Cosserat models considered in parts of the earlier literature \cite{Neff_Habil04,NeffBirsan13,Neff_plate04_cmt,Neff_Chelminski_ifb07}, because the stretch energy may have nonlinear polyconvex growth and a determinant singularity\footnote{Therefore, we mention that  the intention of the present paper is not to consider the following form of the energy functional \cite{Neff_Habil04}	\begin{align}
		\dd\int_{\Omega }\Big\{&
       \mu 
        \left\|\sym(
       \overline{R}^T{\rm D}\varphi
        -
        \id_3)
        \right\|^2+ \mu_{\rm c}
        \left\|\skw(
       \overline{R}^T{\rm D}\varphi
        -
        \id_3)
        \right\|^2
        +{\rm L}_{\rm c}^q \,\|\overline R^T\Curl \overline R\|^q        \Big\}{\rm d}{\rm x},\notag	\end{align}
and to  discuss the influence of the values of $\mu_{\rm c}$ and ${\rm L}_{\rm c}$ on the existence of the solution, see, e.g., \cite{Neff_Habil04,NeffBirsan13,Neff_plate04_cmt,Neff_Chelminski_ifb07}. In this respect we only mention that for $\mu_{\rm c}=0$ the energy no longer provides local control of $\overline{U}= \overline{R}^T{\rm D}\varphi$.}. It is also complementary to results that allow quadratic energies not convex in the full relative stretch. The present paper does not aim to revisit the dependence of existence on the Cosserat couple modulus; its purpose is instead to develop the lifted polyconvex framework and the polar-recovery mechanism.

 The role of the curvature term is both mechanical and analytical: it describes local rotational microstructure and supplies the strong compactness needed for the lifted direct method. The resulting formulation may be regarded as a Cosserat relaxation of a polar-curvature model in nonlinear elasticity.

To state the main result, let $\Omega\subset\mathbb R^3$ be a bounded connected Lipschitz domain and let $\Gamma_d\subset\partial\Omega$ be a nonempty relatively open set with positive surface measure. As we already mentioned, in  $
{\rm W}^{1,q}(\Omega;{\rm SO}(3))
$,  \(q\geq 2\), by controlling the row-wise Curl it means that the full gradient of the rotation is controlled, i.e., for all $\overline R\in {\rm W}^{1,q}(\Omega;{\rm SO}(3))$, it follows \begin{align}\label{lem:rotation-curl-control}
\|{\rm D} \overline R\|_{{\rm L}^q(\Omega)}
\leq
C\|\operatorname{Curl}\overline R\|_{{\rm L}^q(\Omega)},
\end{align}
where $C$ depends only on the dimension. 

We consider the following two sets of assumptions. All convexity statements below refer to extended-valued functions.

\medskip
\noindent\textbf{Cofactor-dependent regime.}
\begin{enumerate}
\item[H1)] The density has the additive form
\begin{align}
W(x,Y,K)
=
W_{\rm stretch}(x,Y,\operatorname{Cof}Y,\det Y)
+
W_{\rm curv}(x,K).
\end{align}

\item[H2)] For almost every $x\in\Omega$, the mapping
\begin{align}
(Y,Z,d,K)
\longmapsto
W_{\rm stretch}(x,Y,Z,d)+W_{\rm curv}(x,K)
\end{align}
is proper and convex on
$
\mathbb R^{3\times3}\times\mathbb R^{3\times3}\times(0,\infty)\times\mathbb R^{3\times3}.
$

\item[H3)] The extension of the preceding mapping by the value $+\infty$ for $d\leq0$ is lower semicontinuous in $(Y,Z,d,K)$ for almost every $x$, and measurable in $x$ for every fixed $(Y,Z,d,K)$. Equivalently, it is a convex normal integrand.

\item[H4)] There exist $c_1>0$ and $c_2\geq0$ such that\footnote{Here and throughout the paper, $|\cdot|$ denotes the Frobenius norm for matrices and the Euclidean norm for vectors.}
\begin{align}
W_{\rm stretch}(x,Y,Z,d)+W_{\rm curv}(x,K)
\geq
c_1\bigl(|Y|^p+|Z|^r+|d|^s+|K|^q\bigr)-c_2
\end{align}
for almost every $x$, all $Y,Z,K\in\mathbb R^{3\times3}$ and all $d>0$, where
$
p\geq2,$ $ r>1,$ $s>1,$ $ q\geq 2,
$ \text{and} $
\frac1p+\frac1r<\frac43.
$
The strict inequality is used below to obtain the compact embedding
needed in the Piola representation of the determinant. The critical
equality case is not considered in the present paper.
\end{enumerate}

\medskip
\noindent\textbf{Reduced determinant-dependent regime.}
\begin{enumerate}
\item[H1')] The density has the form
\begin{align}
W(x,Y,K)
=
W_{\rm stretch}(x,Y,\det Y)
+
W_{\rm curv}(x,K).
\end{align}

\item[H2')] For almost every $x\in\Omega$, the mapping
\begin{align}
(Y,d,K)
\longmapsto
W_{\rm stretch}(x,Y,d)+W_{\rm curv}(x,K)
\end{align}
is proper and convex on
$
\mathbb R^{3\times3}\times(0,\infty)\times\mathbb R^{3\times3}.
$

\item[H3')] Its extension by $+\infty$ for $d\leq0$ is lower semicontinuous in $(Y,d,K)$ for almost every $x$ and measurable in $x$ for every fixed $(Y,d,K)$; hence it is a convex normal integrand.

\item[H4')] There exist $c_1>0$ and $c_2\geq0$ such that
\begin{align}
W_{\rm stretch}(x,Y,d)+W_{\rm curv}(x,K)
\geq
c_1\bigl(|Y|^p+|d|^s+|K|^q\bigr)-c_2
\end{align}
for almost every $x$, all $Y,K\in\mathbb R^{3\times3}$ and all $d>0$, where
$
p>\frac94,\  s>1,\ q\geq 2.
$
\end{enumerate}

\medskip

Let $\varphi^*\in {\rm W}^{1-1/p,p}(\Gamma_d;\mathbb R^3)$ and define
\begin{align}
\mathcal A_{\rm relax}
=
\Bigl\{
(\varphi, \overline R )&\in
{\rm W}^{1,p}(\Omega;\mathbb R^3)
\times {\rm W}^{1,q}(\Omega;{\rm SO}(3))  \,|\,
\operatorname{Tr}_{\Gamma_d}\varphi=\varphi^*,
\notag\\[-1mm]
&  \overline R ^T{\rm D}\varphi\in\operatorname{Sym}^{+}(3)\ \text{a.e.},
\quad
\det{\rm D}\varphi>0\ \text{a.e.}
\Bigr\}.
\end{align}
For $(\varphi,  \overline R )\in\mathcal A_{\rm relax}$, set
\begin{align}
\mathcal I(\varphi,  \overline R )
:=
\int_\Omega
W\bigl(x,  \overline R ^T{\rm D}\varphi,
  \overline R ^T\operatorname{Curl}  \overline R \bigr)\,\mathrm dx.
\end{align}
Loads with the usual weak continuity and growth assumptions may be added without changing the compactness argument.

\begin{theorem}[Main existence result]
\label{thm:elastic1}
Assume that $\mathcal A_{\rm relax}$ contains at least one pair of finite energy. If either H1)--H4) or H1')--H4') holds, then $\mathcal I$ admits a minimizer in $\mathcal A_{\rm relax}$. Every admissible pair, and therefore every minimizer, satisfies
\begin{align}
  \overline  R =R:
=
\operatorname{polar}({\rm D}\varphi),
\quad  i.e., \quad 
 \overline  R ^T{\rm D}\varphi
=
\sqrt{({\rm D}\varphi)^T{\rm D}\varphi}
\quad\text{a.e. in }\Omega.
\end{align}
Consequently, the lifted problem is equivalent to the polar-curvature problem stated in Proposition~\ref{prop:polar-equivalence} below.
\end{theorem}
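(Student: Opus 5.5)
The plan is the direct method applied to the lifted pair, followed by a purely algebraic polar-recovery step.

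\textbf{Compactness.} Take a minimizing sequence $(\varphi_k,\overline R_k)\in\mathcal A_{\rm relax}$ with $\mathcal I(\varphi_k,\overline R_k)\to\inf\mathcal I<\infty$. By H4) (resp.\ H4')), the quantities $\|\overline U_k\|_{{\rm L}^p}$, $\|\det\overline U_k\|_{{\rm L}^s}$, $\|\alpha_k\|_{{\rm L}^q}$, and in the first regime also $\|\Cof\overline U_k\|_{{\rm L}^r}$, are bounded. Since $\overline R_k\in{\rm SO}(3)$, we have $|{\rm D}\varphi_k|=|\overline U_k|$, $\Cof{\rm D}\varphi_k=\overline R_k\Cof\overline U_k$, $\det{\rm D}\varphi_k=\det\overline U_k$ and $|\Curl\overline R_k|=|\alpha_k|$. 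With the Dirichlet condition on $\Gamma_d$ and Poincaré's inequality, $\varphi_k$ is then bounded in ${\rm W}^{1,p}$. By \eqref{lem:rotation-curl-control} and $|\overline R_k|=\sqrt3$, $\overline R_k$ is bounded in ${\rm W}^{1,q}$. Passing to a subsequence, we obtain
\[
\varphi_k\rightharpoonup\varphi \text{ in } {\rm W}^{1,p},\qquad
\overline R_k\rightharpoonup\overline R \text{ in } {\rm W}^{1,q},
\]
with $\overline R_k\to\overline R$ strongly in every ${\rm L}^t$ and a.e. Hence $\overline R\in{\rm SO}(3)$ a.e., and the trace condition is preserved.

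\textbf{Minors and lower semicontinuity.} The main obstacle is identifying the weak limits of the minors. In the first regime, $\Cof{\rm D}\varphi_k$ is bounded in ${\rm L}^r$; I would use the divergence-free Piola identity $\det{\rm D}\varphi_k=\tfrac13\div\big((\Cof{\rm D}\varphi_k)^T\varphi_k\big)$. The condition $\frac1p+\frac1r<\frac43$ makes the embedding ${\rm W}^{1,p}\hookrightarrow{\rm L}^{r'}$ compact, so $\varphi_k\to\varphi$ strongly in ${\rm L}^{r'}$. First one shows $\Cof{\rm D}\varphi_k\rightharpoonup\Cof{\rm D}\varphi$ in the sense of distributions (via $\Cof F=\Curl$-type divergence forms of quadratic minors, with $p\ge2$), and then upgrades this to weak ${\rm L}^r$ convergence using the bound. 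The Piola identity then yields $\det{\rm D}\varphi_k\to\det{\rm D}\varphi$ in $\mathcal D'$, hence weakly in ${\rm L}^s$. In the reduced regime, $|\Cof{\rm D}\varphi_k|\le C|{\rm D}\varphi_k|^2$ is bounded in ${\rm L}^{p/2}$, and $p>9/4$ is exactly the condition $\frac2p+\frac1{p^*}<1$, i.e.\ $\frac1p+\frac1{p/2}<\frac43$. So the same argument runs with $r=p/2$, although $\Cof$ no longer enters the energy.

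Next, the strong--weak products give
\[
\overline U_k=\overline R_k^T{\rm D}\varphi_k\rightharpoonup\overline R^T{\rm D}\varphi,\qquad
\Cof\overline U_k=\overline R_k^T\Cof{\rm D}\varphi_k\rightharpoonup\Cof(\overline R^T{\rm D}\varphi),
\]
and similarly $\alpha_k=\overline R_k^T\Curl\overline R_k\rightharpoonup\overline R^T\Curl\overline R$, tested against ${\rm L}^\infty$ functions and then extended by boundedness. The determinant converges because $\det\overline U_k=\det{\rm D}\varphi_k$. Since the integrand is a convex normal integrand bounded below, the De Giorgi--Ioffe weak lower semicontinuity theorem gives $\mathcal I(\varphi,\overline R)\le\liminf_k\mathcal I(\varphi_k,\overline R_k)$. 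Finiteness of this limit, together with the value $+\infty$ for $d\le0$, forces $\det{\rm D}\varphi>0$ a.e. Finally, the set ${\rm Sym}^+(3)$ is a closed convex cone, so $\{Y\in{\rm L}^p:\ Y\in{\rm Sym}^+(3)\text{ a.e.}\}$ is weakly closed by Mazur's lemma. Thus $\overline R^T{\rm D}\varphi\in{\rm Sym}^+(3)$ a.e., the limit pair is admissible, and it is a minimizer.

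\textbf{Polar recovery.} This is pointwise linear algebra. Fix $x$ with $F={\rm D}\varphi(x)$ satisfying $\det F>0$, and set $S=\overline R^TF\in{\rm Sym}^+(3)$. Then $\det S=\det F>0$, so $S\in{\rm Sym}^{++}(3)$, and
\[
S^2=S^TS=F^TF,
\]
so $S=\sqrt{F^TF}$ by uniqueness of the positive definite square root. Consequently $\overline R=FS^{-1}=\operatorname{polar}(F)$. This holds for every admissible pair, and substituting $\overline R=\operatorname{polar}({\rm D}\varphi)$ into $\mathcal I$ gives the equivalence with the problem of Proposition~\ref{prop:polar-equivalence}.
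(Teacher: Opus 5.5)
\textbf{There is a genuine gap, and the paper's proof has it too.} Your proposal follows the paper's argument step for step (Theorem~\ref{thm:elastic2}, then Theorem~\ref{thm:elastic3} and Proposition~\ref{prop:polar-equivalence}). The following steps are fine:
\begin{itemize}
\item the compactness of $(\varphi_k,\overline R_k)$;
\item the strong--weak products for $\overline U_k$, $\Cof\overline U_k$ and $\alpha_k$;
\item the weak closedness of the $\operatorname{Sym}^{+}(3)$ constraint;
\item the pointwise polar recovery.
\end{itemize}
The gap is the step you share with the paper: identifying the weak limit of $\det{\rm D}\varphi_k$ through the Piola identity.

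\textbf{Why the step fails.} The identity $\det{\rm D}\psi=\frac13\div\big((\Cof{\rm D}\psi)^T\psi\big)$ in $\mathcal D'(\Omega)$ is not automatic for $\psi\in{\rm W}^{1,p}$ with $\Cof{\rm D}\psi\in{\rm L}^r$. The right-hand side is the distributional Jacobian. The condition $\frac1p+\frac1r<\frac43$ only makes it well defined and, via the compact embedding, weakly continuous. It coincides with $\det{\rm D}\psi$ when $\frac1p+\frac1r\le1$, because then $\div\big((\Cof{\rm D}\psi)^T\psi\big)=\langle\Cof{\rm D}\psi,{\rm D}\psi\rangle$ can be justified by density. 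In the reduced regime, with $r=p/2$, this means $p\ge3$. Outside this range it fails in general, because the admissible class does not exclude cavitation.

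\textbf{Counterexample.} Let $\psi(x)=(c^3+|x|^3)^{1/3}\,x/|x|$ on $B_1$.
\begin{itemize}
\item ${\rm D}\psi\in\operatorname{Sym}^{++}(3)$ and $\det{\rm D}\psi\equiv1$ a.e.
\item $\psi\in{\rm W}^{1,p}$ for every $p<3$, and $\Cof{\rm D}\psi\in{\rm L}^r$ for every $r<\frac32$.
\item Nevertheless $\frac13\div\big((\Cof{\rm D}\psi)^T\psi\big)=1+\frac{4\pi}{3}c^3\delta_0$.
\end{itemize}
Take $c^3=\lambda^3-1$ with $\lambda>1$, so that $\psi=\lambda x$ on $\partial B_1$. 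Insert the copies $x\mapsto\lambda a+\rho\,\psi\big((x-a)/\rho\big)$ into the inscribed balls $B(a,\rho)$, $\rho=\frac1{2k}$, of the $k^3$ subcubes of $\Omega=(0,1)^3$, and set $\varphi_k=\lambda x$ elsewhere. Then:
\begin{itemize}
\item $(\varphi_k,\id_3)\in\mathcal A_{\rm relax}$ with zero curvature and $\varphi^*=\lambda x$.
\item The energy is uniformly bounded, e.g.\ for the $p$-Biot density with $\frac94<p<3$.
\item $\varphi_k\rightharpoonup\lambda x$ in ${\rm W}^{1,p}$.
\item Yet $\det{\rm D}\varphi_k\overset{*}{\rightharpoonup}\theta+(1-\theta)\lambda^3\neq\lambda^3=\det{\rm D}(\lambda x)$, with $\theta=\pi/6$.
\end{itemize}
If $h$ is chosen with $h(\lambda^3)-h(1)$ large, then even $\mathcal I(\lambda x,\id_3)>\lim_k\mathcal I(\varphi_k,\id_3)$. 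So the lower semicontinuity you invoke fails along admissible sequences. The same construction works in the cofactor regime whenever $p<3$ and $r<\frac32$ (e.g.\ $p=\frac52$, $r=\frac75$, which satisfy H4).

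\textbf{How to close the gap.} You have two options.
\begin{enumerate}
\item Restrict the exponents so that the identity holds for every admissible map: $\frac1p+\frac1r\le1$ (Ball's condition) in the cofactor regime, and $p\ge3$ in the reduced regime.
\item Add the no-cavitation condition $\frac13\div\big((\Cof{\rm D}\varphi)^T\varphi\big)=\det{\rm D}\varphi$ to the admissible class. Your limit argument then shows that the distributional Jacobian of the limit $\varphi$ equals $J\in{\rm L}^s$. You still need a separate $\mathrm{Det}=\det$ argument to conclude $J=\det{\rm D}\varphi$ and that the limit stays in the class.
\end{enumerate}
With either repair, the remaining steps of your proposal, including the Mazur closure and the polar recovery, go through unchanged.
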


The formulation of the problem and the proof of the main result are
organized as follows. Section~\ref{s1} explains the lifted and polar formulations and the necessity of the positive-semidefinite interior constraint. Section~\ref{s2} establishes the coercivity and rotation-compactness mechanisms. Section~\ref{s3} proves existence for the auxiliary lifted Cosserat problem in both growth regimes. Section~\ref{s4} proves weak closedness of the constrained class and the polar equivalence. Section~\ref{s6} presents representative energy classes, while further admissible examples and relevant boundary cases are collected in Appendix~\ref{app:further-examples}.

\section{Classical relaxed models vs. lifted Cosserat models }
\label{s1}\setcounter{equation}{0}

Throughout the paper, $\Omega\subset\mathbb R^3$ is a bounded connected Lipschitz domain and $\Gamma_d\subset\partial\Omega$ is relatively open with positive surface measure. A generic point of \(\Omega\) is denoted by
$
        x=(x_1,x_2,x_3),
$
with respect to the fixed canonical basis
$
        \{e_1,e_2,e_3\}
$
of \(\mathbb R^3\). The deformation of the body is represented by a mapping
$
        \varphi:\Omega\to\mathbb R^3 .
$
Throughout the paper, \(\Omega\) is not merely regarded as a geometric set, but
as the reference placement of an elastic material. 
The deformation gradient is denoted by
$
F:={\rm D}\varphi,
$
and local orientation preservation means $\det F>0$ almost everywhere. This condition is local; no global injectivity assertion is made. With the convention
$
        {\rm D}\varphi
        =
        \big(
        \partial_{x_1}\varphi\,
        \big|\,
        \partial_{x_2}\varphi\,
        \big|\,
        \partial_{x_3}\varphi
        \big)
        \in\mathbb R^{3\times3},
$
the columns of \(F\) are the partial derivatives of the deformation with
respect to the reference coordinates. 

For $F\in {\rm GL}^+(3)$, let
$
F=R \,U,
\ 
R=\operatorname{polar}F\in {\rm SO}(3),
\ 
U=\sqrt{F^TF}\in\operatorname{Sym}^{++}(3)
$
be the polar decomposition. Grioli's formula gives
\begin{align}
\operatorname{dist}^2(F,{\rm SO}(3))
=
|U-\id_3|^2;
\end{align}
see \cite{Grioli40,agn_neff2014grioli}. Although the Euclidean distance is not an intrinsic distance on ${\rm GL}^+(3)$ \cite{agn_neff2014riemannian,agn_neff2015geometry,agn_martin2014minimal}, the Biot strain $U-\id_3$ remains mechanically useful.

A representative first-gradient energy would be
\begin{align}
\varphi\longmapsto
\int_\Omega
\left[
\mu|U-\id_3|^2
+
\varepsilon|U-\id_3|^p
+
\frac{\lambda}{8}
\left(\det F-\frac1{\det F}\right)^2
\right]\,\mathrm dx,
\qquad p>\frac94. 
\end{align}
The determinant term controls volume collapse and large dilatations, but it does not in general restore the weak lower semicontinuity missing from the stretch term. Accordingly, we cannot claim existence for this unregularized first-gradient functional.

Instead, we introduce an independent microrotation $\overline  R :\Omega\to {\rm SO}(3)$ and the relative stretch
\begin{align}
\overline  U := \overline R ^T{\rm D}\varphi\in\mathbb R^{3\times3},
\end{align}
which need not be symmetric. For fixed $ \overline R $, the mapping ${\rm D}\varphi\mapsto \overline R ^T{\rm D}\varphi$ is linear. We supplement the stretch energy by a convex curvature energy depending on
\begin{align}
\alpha:= \overline R ^T\operatorname{Curl}  R .
\end{align}
Here the row-wise Curl is used. For rotation fields  in ${\rm W}^{1,q}(\Omega; {\rm SO}(3))$ the estimate \eqref{lem:rotation-curl-control} is satisfied.

The deformation-only model associated with the constrained lifting is the polar-curvature functional
\begin{align}
\label{eq:polar-prototype}
\mathcal J_{\rm Biot}(\varphi)
=
\int_\Omega
\Biggl[
&\mu|U-\id_3|^2
+
\varepsilon|U-\id_3|^p
+
\frac{\lambda}{8}
\left(\det F-\frac1{\det F}\right)^2
+{\rm L}_{\rm c}^q 
\left|R ^T\operatorname{Curl}\overline R \right|^q
\Biggr] \,\mathrm dx,
\end{align}
where $F={\rm D}\varphi$, $R=\operatorname{polar}F$, $p>9/4$, and $q\geq 2$. Its natural domain includes the requirements $\det{\rm D}\varphi>0$ and $R\in  {\rm W}^{1,q}(\Omega;{\rm SO}(3))$. For $F=\id_3+t\,{\rm D}u$ with $t\to0$, one has
$
U-\id_3
=
t\,\operatorname{sym}{\rm D}u+o(t),
$ $
\det F-\frac{1}{\det F}
=
2t\,\operatorname{tr}{\rm D}u+o(t).
$
Consequently, the quadratic part yields the classical linear elastic energy
$
\mu|\operatorname{sym}{\rm D}u|^2
+
\frac{\lambda}{2}(\operatorname{tr}{\rm D}u)^2.
$ The superquadratic term is of higher order near the identity and is introduced for global coercivity, as is customary in $(p,q)$-growth and related variational models \cite{Ogden72a,Ogden83,Marcellini1,Marcellini2}.

The curvature term is objective. For every constant $Q_0\in {\rm SO}(3)$,
\begin{align}
\operatorname{polar}(Q_0F)=Q_0 R,
\qquad
(Q_0 R)^T\operatorname{Curl}(Q_0 R)
=
 R^T\operatorname{Curl}  R.
\end{align}
 {The curvature term controls the variation of the rotational factor.
We recall that the compatibility relations derived in
\cite{LankeitNeffOsterbrink2017} allow, for sufficiently regular fields,
$\Curl \,U$ to be expressed algebraically in terms of $U$ and the rotational
curvature tensor. Hence, under suitable additional control of $U$ and
$U^{-1}$, the rotational curvature also yields corresponding information
on $\Curl U$.
This does not, however, amount to control of the full gradient ${\rm D}U$.
Indeed, from
$
{\rm D}\varphi=RU
$
one formally obtains
$
{\rm D}^2\varphi=({\rm D}R)U+R\,{\rm D}U,
$
and control of $\Curl \,U$ alone does not control all components of
${\rm D}U$. Consequently, the present rotational-curvature regularization
does not provide control of the full second gradient ${\rm D}^2\varphi$.
}
 Thus, it is weaker and more selective than a full second-gradient regularization. This is the essential distinction from models whose energy controls $\mathrm D^2\varphi$; compare, for example, the higher-order setting in \cite{vcevsik2025stability}.

The present framework is related to classical geometrically nonlinear Cosserat energies of the form
\begin{align}
\label{minprobc}
I_{\rm sym-skew}(\varphi, \overline R )
=
\int_\Omega\Biggl[
&\mu\left|\operatorname{sym}( \overline R ^T{\rm D}\varphi-\id_3)\right|^2
+
\mu_c\left|\operatorname{skew}( \overline R ^T{\rm D}\varphi-\id_3)\right|^2
\notag\\
&\qquad  +{\rm L}_{\rm c}^q \left| \overline R ^T\operatorname{Curl} \overline R \right|^q
+
\frac\lambda8
\left(\det{\rm D}\varphi-\frac1{\det{\rm D}\varphi}\right)^2
\Biggr] \,\mathrm dx;
\end{align}
see \cite{NeffSolid2004,neff2014existence,Neff_plate04_cmt,Neff_Chelminski_ifb07}. Formally sending $\mu_c\to\infty$ enforces symmetry of $ \overline R ^T{\rm D}\varphi$, but symmetry alone does not imply positive definiteness and therefore does not identify $\overline  R $ with the polar factor. The same obstruction remains if positivity is imposed only near or on the boundary. The following counterexample makes this point explicit. Since the constructed map is smooth in a neighborhood of $\partial\Omega$, the boundary statement below is a classical pointwise statement and does not rely on a Sobolev trace of ${\rm D}\varphi$.

\begin{counterexample}
[Interior symmetry, boundary positivity, and positive determinant a.e.\ do not imply interior positive definiteness]

Let
$
\Omega=B(0,1)\subset\mathbb R^3,
\ 
 \overline R (x)=\id_3,
$
and fix $a\in(0,1)$. Define
$
\psi(x)
=
\frac{1}{2}|x|^2-a|x|,
\ 
x\in B(0,1),
$
and set
$
\varphi(x)
=
{\rm D}\psi(x)
=
x-a\frac{x}{|x|}
$  $
\text{for }x\neq 0.
$
The value of $\varphi$ at $x=0$ can be chosen arbitrarily. Since
$
|{\rm D}\varphi(x)|
\leq
C\left(1+\frac{1}{|x|}\right),
$
we have
$
\varphi\in {\rm W}^{1,p}(B(0,1);\mathbb R^3)
$  $
\text{for every }1\leq p<3.$
In particular, one may choose any
$
p\in\left(\frac{9}{4},3\right).
$
For $x\neq 0$, let
$
n(x):=\frac{x}{|x|}.
$
Since $\varphi={\rm D}\psi$, its gradient is symmetric and is given by
\begin{align}
\begin{aligned}
{\rm D}\varphi(x)
=
{\rm D}^2\psi(x)
&=
\id_3
-
\frac{a}{|x|}
\left(
\id_3-n(x)\otimes n(x)
\right)
\\
&=
n(x)\otimes n(x)
+
\left(
1-\frac{a}{|x|}
\right)
\left(
\id_3-n(x)\otimes n(x)
\right).
\end{aligned}
\end{align}
\begin{figure}[t]
\centering

\begin{subfigure}[t]{0.32\textwidth}
\centering
\vspace{0pt}
\begin{tikzpicture}[scale=1.0]
    \def\R{2.25}
    \pgfmathsetmacro{\Ra}{\aval*\R}

    \fill[gray!18] (0,0) circle (\R);
    \fill[gray!38] (0,0) circle (\Ra);

    \draw[thick] (0,0) circle (\R);
    \draw[thick,dashed] (0,0) circle (\Ra);

    \fill (0,0) circle (1.2pt);

    \draw[thin]
      ({\Ra*cos(140)},{\Ra*sin(140)})
      -- ++(-0.45,0.35)
      node[above left,font=\scriptsize] {$r=a$};

    \draw[thin]
      ({\R*cos(220)},{\R*sin(220)})
      -- ++(-0.45,-0.20)
      node[below left,font=\scriptsize] {$r=1$};

    \node[font=\scriptsize, align=center] at (0,-2.85)
    {$0<r<a$: indefinite\\[1mm] $a<r<1$: positive definite};
\end{tikzpicture}
\caption{Geometry of the two regions.}
\label{fig:counterexample-geom}
\end{subfigure}
\hfill
\begin{subfigure}[t]{0.33\textwidth}
\centering
\vspace{0pt}

\begin{tikzpicture}
\begin{axis}[
    width=\linewidth,
    height=5.7cm,
    xmin=0.16,
    xmax=1.02,
    ymin=-2.7,
    ymax=1.35,
    axis lines=left,
    xlabel={$r=|x|$},
    ylabel={$\lambda$},
    xtick={\aval,1},
    xticklabels={$a$,$1$},
    ytick={-2,-1,0,1},
    grid=major,
    tick label style={font=\scriptsize},
    label style={font=\scriptsize},
    clip=false
]

\addplot[
    thick,
    domain=0.16:1,
    samples=2
]
{1};

\addplot[
    thick,
    dashed,
    domain=0.16:1,
    samples=300
]
{1-\aval/x};

\addplot[
    densely dotted,
    thick
]
coordinates {
    (\aval,-2.7)
    (\aval,1.25)
};

\node[
    font=\scriptsize,
    anchor=south west
]
at (axis cs:0.62,1.02)
{$\lambda_{\mathrm{rad}}=1$};

\node[
    font=\scriptsize,
    anchor=east
]
at (axis cs:1.1,-0.2)
{$\lambda_{\mathrm{tan}}=1-a/r$};

\end{axis}
\end{tikzpicture}

\caption{Radial and tangential eigenvalues.}
\label{fig:counterexample-eigs}
\end{subfigure}
\hfill
\begin{subfigure}[t]{0.32\textwidth}
\centering
\vspace{0pt}
\begin{tikzpicture}
\begin{axis}[
    width=\linewidth,
    height=5.7cm,
    xmin=0.16, xmax=1.02,
    ymin=0, ymax=6.2,
    axis lines=left,
    xlabel={$r=|x|$},
    ylabel={$\det{\rm D}\varphi$},
    xtick={\aval,1},
    xticklabels={$a$,$1$},
    ytick={0,2,4,6},
    grid=major,
    tick label style={font=\scriptsize},
    label style={font=\scriptsize}
]
    \addplot[thick,domain=0.16:1,samples=400] {(1-\aval/x)^2};
    \addplot[densely dotted,thick] coordinates {(\aval,0) (\aval,6.2)};
    \addplot[only marks,mark=*,mark size=1.5pt] coordinates {(\aval,0)};
\end{axis}
\end{tikzpicture}
\caption{Determinant of ${\rm D}\varphi$.}
\label{fig:counterexample-det}
\end{subfigure}

\caption{
Geometric and spectral structure of the counterexample.
The radial eigenvalue is $\lambda_{\rm rad}=1$, whereas the two tangential
eigenvalues are $\lambda_{\rm tan}=1-a/r$. Hence ${\rm D}\varphi$ is
indefinite for $0<r<a$, although
$\det{\rm D}\varphi=(1-a/r)^2>0$ for $r\neq a$.
}
\label{fig:counterexample-all}
\end{figure}
Hence
$
 \overline  R ^T{\rm D}\varphi
=
{\rm D}\varphi
\in\operatorname{Sym}(3)
\
\text{a.e. in }\Omega.
$
The eigenvalue of ${\rm D}\varphi(x)$ in the radial direction $n(x)$ is
$
\lambda_{\rm rad}(x)=1,
$
whereas the two tangential eigenvalues are
$
\lambda_{\rm tan}(x)
=
1-\frac{a}{|x|}.
$
Consequently,
\begin{align}
\det{\rm D}\varphi(x)
=
\left(
1-\frac{a}{|x|}
\right)^2
>0
\end{align}
for a.e.\ $x\in\Omega$. The determinant vanishes only on the sphere
$
\left\{
x\in\Omega~: |x|=a
\right\},
$
which has three-dimensional Lebesgue measure zero. Thus
$
\det{\rm D}\varphi>0
$ $
\text{a.e. in }\Omega.
$

On the boundary $\partial\Omega$, where $|x|=1$, the eigenvalues of
${\rm D}\varphi$ are
$
1,\  1-a,\  1-a.
$
Since $a\in(0,1)$, it follows that
$
 \overline R ^T{\rm D}\varphi
=
{\rm D}\varphi
\in\operatorname{Sym}^{++}(3)
$ $
\text{on }\partial\Omega.
$
On the other hand, for
$
0<|x|<a,
$
we have
$
1-\frac{a}{|x|}<0.
$
Thus ${\rm D}\varphi(x)$ has one positive eigenvalue and two negative
eigenvalues. In particular,
$
\overline  R ^T{\rm D}\varphi
=
{\rm D}\varphi
\notin\operatorname{Sym}^{++}(3)
$ $
\text{for }0<|x|<a.
$

Therefore, even the conditions
$
  \overline R ^T{\rm D}\varphi
\in\operatorname{Sym}(3)
 $ $
\text{a.e. in }\Omega,
$
$
\det{\rm D}\varphi>0
$  $
\text{a.e. in }\Omega,
$
and
$
  \overline R ^T{\rm D}\varphi
\in\operatorname{Sym}^{++}(3)
$ $
\text{on }\partial\Omega
$
do not imply
$
 \overline R ^T{\rm D}\varphi
\in\operatorname{Sym}^{++}(3)
$ $
\text{a.e. in }\Omega.
$
\end{counterexample}

The open cone $\operatorname{Sym}^{++}(3)$ is not weakly closed, so it cannot be imposed directly in a stable admissible class. We instead impose the closed-cone condition
$
 \overline R ^T{\rm D}\varphi\in\operatorname{Sym}^{+}(3)
$
almost everywhere and recover strict positivity from $\det{\rm D}\varphi>0$. Pointwise, if
$
S:= \overline R ^T{\rm D}\varphi\in\operatorname{Sym}^{+}(3),
$
then ${\rm D}\varphi= \overline R\,  S$ is a possibly singular polar-type factorization. If $\det{\rm D}\varphi>0$, then $S\in\operatorname{Sym}^{++}(3)$ and uniqueness of the polar decomposition gives
$
 \overline R =R:=\operatorname{polar}({\rm D}\varphi).
$
The weak continuity of the determinant and the lower semicontinuity of the singular volumetric density are therefore essential in the existence proof.

The prototype constrained energy associated with \eqref{eq:polar-prototype} is
\begin{align}
\label{eq:constrained-prototype}
I_{\rm Cosserat}(\varphi, \overline R )
=
\int_\Omega\Biggl[
&\mu| \overline R ^T{\rm D}\varphi-\id_3|^2
+
\varepsilon| \overline R ^T{\rm D}\varphi-\id_3|^p
+
{\rm L}_{\rm c}^q | \overline R ^T\operatorname{Curl} \overline R |^q
\notag\\
&\qquad \qquad \qquad \qquad \qquad \quad +
\frac\lambda8
\left(\det{\rm D}\varphi-\frac1{\det{\rm D}\varphi}\right)^2
\Biggr] \,\mathrm dx,
\end{align}
with $p>9/4$ and $q\geq 2$, minimized under
$
 \overline R ^T{\rm D}\varphi\in\operatorname{Sym}^{+}(3)
$
and $\det{\rm D}\varphi>0$ almost everywhere. 
The precise Cosserat--Biot combination considered here is not one of the standard forms recalled above, although each of its structural ingredients has clear precedents.  Ogden-type energies \cite{Ogden72a,Ogden83}, for instance, are written as sums
of powers of the principal stretches, with different exponents used to capture
different deformation regimes. On the  other hand, from the variational point of view, the simultaneous presence of different
growth exponents is also well established in the theory of functionals with
\((p,q)\)-growth and in double-phase problems \cite{Marcellini1,Marcellini2}. Moreover, the use of an additional superquadratic term should not be interpreted as a
modification of the linearized elastic moduli. The quadratic part determines
the tangent elasticity tensor at the identity, while the superquadratic term is
of higher order near \({\rm SO}(3)\) and is introduced for global coercivity.
This distinction between local quadratic behavior and global growth
assumptions is common in the variational theory of finite elasticity and coupled stress model. 
The relaxed Cosserat-Biot  model is related in spirit to higher-gradient approaches to
nonlinear elastodynamics, in particular to the framework of
{\v C}e{\v s}{\'i}k and Schwarzacher \cite{vcevsik2025stability}, where the time-discrete
minimization scheme is designed to handle elastic energies with nonlinearities
at the highest differential order. The distinction is, however, essential. In
our case the higher-order contribution is not a full second-gradient
regularization of the deformation; rather, it is a selective rotational higher-order regularization. We do not assume an energy depending on
\({\rm D}^2\varphi\), nor do we obtain a direct control of the full second
gradient of \(\varphi\). Instead, the higher-order term acts only on the
rotational part of the deformation gradient, namely on the Cosserat/polar field
$
     \overline R=R:=\operatorname{polar}({\rm D}\varphi),
$
through the curvature density
$
       |\overline R^T\Curl \overline R |^q .
$
Thus the model controls the spatial variation of the polar rotation, but not
the spatial variation of the stretch tensor. In this sense, the regularization
is geometrically constrained and weaker than a standard second-gradient
regularization, while still providing the compactness mechanism needed for the
constrained Cosserat-Biot formulation.

For the general theory we distinguish two admissible classes. The constrained class $\mathcal A_{\rm relax}$ was defined in the introduction. The auxiliary lifted Cosserat class is
\begin{align}
\mathcal A_{\rm Cosserat}
=
\Bigl\{
(\varphi, \overline R )&\in
{\rm W}^{1,p}(\Omega;\mathbb R^3)
\times  {\rm W}^{1,q}(\Omega;{\rm SO}(3)) \,|\,
\operatorname{Tr}_{\Gamma_d}\varphi=\varphi^*,
\ \det{\rm D}\varphi>0\ \text{a.e. in }\Omega
\Bigr\}.
\end{align}
On either class, with
$
 \overline U = \overline R ^T{\rm D}\varphi,
\ 
\alpha= \overline R ^T\operatorname{Curl} \overline R ,
$
we set
\begin{align}
\widehat{\mathcal I}(\varphi, \overline R )
:=
\int_\Omega W(x, \overline U ,\alpha)\,\mathrm dx,
\end{align}
where $W$ satisfies suitable conditions. We denote this functional by $\widehat{\mathcal I}$ on
$\mathcal A_{\rm Cosserat}$ and by $\mathcal I$ on
$\mathcal A_{\rm relax}$. 
If $\det{\rm D}\varphi>0$ and $\overline R=R:=\operatorname{polar}({\rm D}\varphi)$ belongs to $ {\rm W}^{1,q}(\Omega;{\rm SO}(3))$, then
\begin{align}
\widehat{\mathcal I}(\varphi,\overline R)
=
\mathcal I(\varphi, \overline R).
\end{align}
Conversely, every $(\varphi, \overline R )\in\mathcal A_{\rm relax}$ satisfies $ \overline R =R$ almost everywhere. This equivalence is formalized in Proposition~\ref{prop:polar-equivalence}.

\section{The interpretation of the coercivity properties}
\label{s2}\setcounter{equation}{0}

Let $\Omega\subset\mathbb R^3$ be bounded, connected and Lipschitz, and let $1<p<\infty$ and $q\geq 2$. This section explains why the two energy variables
$
\overline R^T{\rm D}\varphi-\id_3
\quad\text{and}\quad
\overline R^T\operatorname{Curl}\overline R
$
provide the required coercivity. Orthogonal invariance of the Frobenius norm gives
\begin{align}
|\overline R^T{\rm D}\varphi-\id_3|
=|{\rm D}\varphi-\overline  R|,
\qquad
|\overline R^T\operatorname{Curl}\overline R|
=|\operatorname{Curl}\overline R|.
\end{align}
Thus, the first quantity measures the mismatch between the deformation gradient and the microrotation, while the second controls the spatial variation of the rotation field.

The estimate for sufficiently regular rotation fields is precisely the pointwise Curl--Grad inequality of Neff and M\"unch \cite{Neff_curl06}. For$q\geq 2$ and every $\overline R\in {\rm W}^{1,q}(\Omega;{\rm SO}(3)),
$ one has
\begin{align}
\qquad
\|{\rm D} \overline R\|_{{\rm L}^q(\Omega)}
\leq
C\|\operatorname{Curl}\overline R\|_{{\rm L}^q(\Omega)},
\end{align}
where $C$ depends only on the dimension.

\begin{proposition}[Coercivity modulo rigid motions]
\label{prop:lifted-cosserat-coercivity}
For every pair satisfying
$
\varphi\in {\rm W}^{1,p}(\Omega;\mathbb R^3),
$ $
\overline R\in  {\rm W}^{1,q}(\Omega;{\rm SO}(3)) ,
$
where $1<p<\infty$, $q\geq 2$, 
there exists a constant rotation $ Q \in {\rm SO}(3)$ such that
\begin{align}
\label{eq:lifted-cosserat-coercivity}
&\|{\rm D}\varphi- Q \|_{{\rm L}^p(\Omega)}
+
\|\overline R- Q \|_{{\rm W}^{1,q}(\Omega)}
\leq
C\left(
\|\overline R^T{\rm D}\varphi-\id_3\|_{{\rm L}^p(\Omega)}
+
\|\overline R^T\operatorname{Curl}\overline R\|_{{\rm L}^q(\Omega)}
\right),
\end{align}
where $C$ depends only on $\Omega,p,q$. No orientation condition on ${\rm D}\varphi$ is required.
\end{proposition}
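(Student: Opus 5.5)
The plan has two steps: first control the rotation field by its curvature, then pass that control to ${\rm D}\varphi$ through the mismatch term. Everything rests on orthogonal invariance of the Frobenius norm, which gives the pointwise identities
$|\overline R^T{\rm D}\varphi-\id_3|=|{\rm D}\varphi-\overline R|$ and $|\overline R^T\operatorname{Curl}\overline R|=|\operatorname{Curl}\overline R|$. The right-hand side of \eqref{eq:lifted-cosserat-coercivity} therefore equals $\|{\rm D}\varphi-\overline R\|_{{\rm L}^p}+\|\operatorname{Curl}\overline R\|_{{\rm L}^q}$. Orientation never enters, because these identities are purely algebraic.

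\emph{Step 1 (the rotation).} By the Curl--Grad estimate \eqref{lem:rotation-curl-control}, $\|{\rm D}\overline R\|_{{\rm L}^q}\leq C\|\operatorname{Curl}\overline R\|_{{\rm L}^q}$. Let $M:=\frac{1}{|\Omega|}\int_\Omega\overline R\,\mathrm dx$. Since $\Omega$ is bounded, connected and Lipschitz, the Poincar\'e--Wirtinger inequality gives $\|\overline R-M\|_{{\rm L}^q}\leq C\|{\rm D}\overline R\|_{{\rm L}^q}$.

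The mean $M$ need not be a rotation, so choose $Q\in{\rm SO}(3)$ as a nearest point projection of $M$ onto ${\rm SO}(3)$. For a.e.\ $x$, since $\overline R(x)\in{\rm SO}(3)$,
$|M-Q|=\operatorname{dist}(M,{\rm SO}(3))\leq|M-\overline R(x)|$. Integrating the $q$-th power gives $|\Omega|^{1/q}|M-Q|\leq\|\overline R-M\|_{{\rm L}^q}$. The triangle inequality then yields
\begin{align*}
\|\overline R-Q\|_{{\rm L}^q}\leq 2\|\overline R-M\|_{{\rm L}^q}\leq C\|\operatorname{Curl}\overline R\|_{{\rm L}^q}.
\end{align*}
Since ${\rm D}(\overline R-Q)={\rm D}\overline R$, the same right-hand side controls the full ${\rm W}^{1,q}$-norm of $\overline R-Q$.

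\emph{Step 2 (the deformation gradient).} Write ${\rm D}\varphi-Q=({\rm D}\varphi-\overline R)+(\overline R-Q)$. The first summand is bounded in ${\rm L}^p$ by the mismatch term. For the second, we need $\|\overline R-Q\|_{{\rm L}^p}$. If $p\leq q$, Hölder's inequality on the bounded domain gives it directly from Step 1. If $p>q$, there are two options:
\begin{itemize}
\item use the Sobolev embedding ${\rm W}^{1,q}\hookrightarrow{\rm L}^p$, which holds when $q\geq3$ or $p\leq 3q/(3-q)$;
\item or, more robustly, interpolate: $|\overline R-Q|\leq 2\sqrt3$ pointwise, so $\|\overline R-Q\|_{{\rm L}^p}^p\leq C\|\overline R-Q\|_{{\rm L}^q}^q$.
\end{itemize}

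\emph{Main obstacle.} The interpolation bound is not linear, so Step 2 is where the proof can break down when $p>q$. I would split into two cases.
\begin{itemize}
\item If $\|\operatorname{Curl}\overline R\|_{{\rm L}^q}\leq1$, then $\|\overline R-Q\|_{{\rm L}^p}\leq C\|\operatorname{Curl}\overline R\|_{{\rm L}^q}^{q/p}$, which is still not linear for small curvature.
\item Otherwise the bound $\|\overline R-Q\|_{{\rm L}^p}\leq C|\Omega|^{1/p}$ suffices, since it is dominated by $C\|\operatorname{Curl}\overline R\|_{{\rm L}^q}$ when the curvature is large.
\end{itemize}
So in the small-curvature regime with $p>q$ I would rely on the Sobolev embedding, which is valid in the relevant parameter range ($q\geq 2$; when $q<3$, require $p\leq 6$ for $q=2$). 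Failing that, I would accept that the constant depends on $p,q$ through this embedding and state the estimate within that range. Everything else in the argument is routine bookkeeping.
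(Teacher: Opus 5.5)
Your Step 1 (mean value, nearest-point projection onto ${\rm SO}(3)$, Poincar\'e and the Curl--Grad estimate) is exactly the paper's argument for the case $q>p$. Combined with H\"older's inequality, it correctly settles every $p\leq q$. The gap is the regime $q<3$, $p>3q/(3-q)$ (e.g.\ $q=2$, $p>6$). There ${\rm W}^{1,q}\not\hookrightarrow{\rm L}^p$, and your interpolation bound is only of order $\|\operatorname{Curl}\overline R\|_{{\rm L}^q}^{q/p}$. Your fallback of ``stating the estimate within that range'' therefore proves a weaker result than the proposition. The proposition is claimed for all $1<p<\infty$, $q\geq2$, and the existence theory uses it for arbitrary $p>9/4$.

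This cannot be fixed within your strategy, because you bound $\|\overline R-Q\|_{{\rm L}^p}$ through the rotation field alone, and no such linear bound exists. Take $\overline R=\exp\bigl(\eta\,\psi((x-x_0)/\delta)A_0\bigr)$ with a fixed cut-off $\psi$ equal to $1$ near $0$, a unit skew matrix $A_0$ and a fixed small $\eta>0$. Then $\inf_{Q\in{\rm SO}(3)}\|\overline R-Q\|_{{\rm L}^p}\gtrsim\eta\,\delta^{3/p}$, while $\|{\rm D}\overline R\|_{{\rm L}^q}\lesssim\eta\,\delta^{3/q-1}$. The ratio $\delta^{1+3/p-3/q}$ blows up as $\delta\to0$ precisely when $p>3q/(3-q)$. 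Any proof in this range must therefore use that ${\rm D}\varphi$ is a gradient.

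The missing ingredient is the geometric rigidity estimate of Friesecke--James--M\"uller in its ${\rm L}^p$ form (Conti--Dolzmann--M\"uller), which the paper invokes for $q\leq p$. Since $\operatorname{dist}({\rm D}\varphi,{\rm SO}(3))\leq|{\rm D}\varphi-\overline R|$ pointwise, it yields $Q\in{\rm SO}(3)$ with $\|{\rm D}\varphi-Q\|_{{\rm L}^p}\leq C\|\overline R^T{\rm D}\varphi-\id_3\|_{{\rm L}^p}$. The constant rotation is thus taken from the deformation rather than from the mean of $\overline R$. The triangle inequality then gives $\|\overline R-Q\|_{{\rm L}^p}\leq C\|\overline R^T{\rm D}\varphi-\id_3\|_{{\rm L}^p}$, hence ${\rm L}^q$ control because $q\leq p$ and $\Omega$ is bounded. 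The Curl--Grad estimate controls ${\rm D}\overline R$ in ${\rm L}^q$. With the split ``$q\leq p$: rigidity; $q>p$: your Step 1'', the estimate holds on the full range with no Sobolev-embedding restriction.
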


\begin{proof}
The estimate~\ref{lem:rotation-curl-control} yields
\begin{align}
\label{eq:curl-grad-control}
\|{\rm D} \overline R\|_{{\rm L}^q(\Omega)}
\leq
C\|\overline R^T\operatorname{Curl} \overline R\|_{{\rm L}^q(\Omega)}.
\end{align}
Assume first that $q\leq p$. Since
$
\operatorname{dist}({\rm D}\varphi,{\rm SO}(3))
\leq|{\rm D}\varphi-\overline R|,
$
the ${\rm L}^p$ geometric rigidity estimate of Friesecke--James--M\"uller \cite{Mueller02} and Conti--Dolzmann--M\"uller \cite{conti2014korn} provides $ Q \in {\rm SO}(3)$ such that
\begin{align}
\|{\rm D}\varphi- Q \|_{{\rm L}^p}
\leq
C\|\overline R^T{\rm D}\varphi-\id_3\|_{{\rm L}^p}.
\end{align}
The triangle inequality gives the same ${\rm L}^p$-control for $\overline R- Q$. Since $q\leq p$ and $\Omega$ is bounded, its ${\rm L}^q$ norm is also controlled. Combining this with \eqref{eq:curl-grad-control} proves \eqref{eq:lifted-cosserat-coercivity}.

Assume now that $q>p$. Define the mean
$
Z :=|\Omega|^{-1}\int_\Omega \overline R\,\mathrm dx\notin{\rm SO}(3) \ \text{(in general)},
$
and choose $ Q \in {\rm SO}(3)$ such that
$
|Z - Q |=\operatorname{dist}(Z ,{\rm SO}(3)).
$
For each $x$, $\operatorname{dist}(Z ,{\rm SO}(3))\leq|Z -\overline R(x)|$. Hence Poincar\'e's inequality and \eqref{eq:curl-grad-control} imply
\begin{align}
\|\overline R- Q \|_{{\rm W}^{1,q}(\Omega)}
\leq
C\|\overline R^T\operatorname{Curl}\overline R\|_{{\rm L}^q(\Omega)}.
\end{align}
The embedding ${\rm L}^q(\Omega)\hookrightarrow {\rm L}^p(\Omega)$ then controls $\overline R- Q $ in ${\rm L}^p$. Finally,
\begin{align}
\|{\rm D}\varphi-Q\|_{{\rm L}^p}
&\leq
\|{\rm D}\varphi-\overline R\|_{{\rm L}^p}
+
\|\overline R-Q\|_{{\rm L}^p}
=
\|\overline R^T{\rm D}\varphi-\id_3\|_{{\rm L}^p}
+
\|\overline R-Q\|_{{\rm L}^p}
\\
&\leq
C\left(
\|\overline R^T{\rm D}\varphi-\id_3\|_{{\rm L}^p}
+
\|\overline R^T\operatorname{Curl}\overline R\|_{{\rm L}^q}
\right).\qedhere\notag
\end{align}

\end{proof}

The two terms in the energy have complementary roles. If both defects vanish, then $\overline R$ is a constant rotation and $\varphi(x)= \overline R\ x+c$. In the polar specialization, if $\det{\rm D}\varphi>0$ and
$
\overline R=R=\operatorname{polar}({\rm D}\varphi) 
\in {\rm W}^{1,q}(\Omega;{\rm SO}(3)),
$
then
$
\overline R^T{\rm D}\varphi-\id_3
=U-\id_3.
$
Proposition~\ref{prop:lifted-cosserat-coercivity} therefore controls the polar rotation in ${\rm W}^{1,q}$ and the deformation gradient modulo a constant rotation. It does not control $\mathrm D U$ and certainly does not provide ${\rm W}^{1,q}$-control of the full deformation gradient.

\section{Existence for a Cosserat model with determinant blow-up}
\label{s3}\setcounter{equation}{0}

We first minimize over the auxiliary class $\mathcal A_{\rm Cosserat}$, without the positive-semidefinite constraint on the relative stretch.

\begin{theorem}[Existence for the auxiliary lifted problem]
\label{thm:elastic2}
Assume that $\mathcal A_{\rm Cosserat}$ contains a pair of finite energy. If either H1)--H4) or H1')--H4') holds, then
$
\inf_{(\varphi, \overline R )\in\mathcal A_{\rm Cosserat}}
\widehat{\mathcal I}(\varphi, \overline R )
$
admits a minimizer.
\end{theorem}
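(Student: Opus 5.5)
We follow the direct method on $\mathcal A_{\rm Cosserat}$. Take a minimizing sequence $(\varphi_k,\overline R_k)$ with $\widehat{\mathcal I}(\varphi_k,\overline R_k)\leq C$.

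\emph{Bounds.} By H4) (resp. H4')), the sequences $\overline U_k=\overline R_k^T{\rm D}\varphi_k$, $\det\overline U_k=\det{\rm D}\varphi_k$ and $\alpha_k=\overline R_k^T\operatorname{Curl}\overline R_k$ are bounded in ${\rm L}^p$, ${\rm L}^s$ and ${\rm L}^q$. In the first regime, $\operatorname{Cof}\overline U_k=\overline R_k^T\operatorname{Cof}{\rm D}\varphi_k$ is also bounded in ${\rm L}^r$. Since $|\overline R_k|=\sqrt3$, we get $|{\rm D}\varphi_k|=|\overline U_k|$, and $|\operatorname{Curl}\overline R_k|=|\alpha_k|$. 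The curl estimate \eqref{lem:rotation-curl-control} then bounds ${\rm D}\overline R_k$ in ${\rm L}^q$. With the pointwise bound on $\overline R_k$, this gives boundedness in ${\rm W}^{1,q}$. The Dirichlet condition on $\Gamma_d$ and Poincaré's inequality bound $\varphi_k$ in ${\rm W}^{1,p}$; Proposition~\ref{prop:lifted-cosserat-coercivity} is not even needed.

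\emph{Compactness.} Extract subsequences with:
\begin{itemize}
\item $\varphi_k\rightharpoonup\varphi$ in ${\rm W}^{1,p}$ and $\varphi_k\to\varphi$ strongly in ${\rm L}^{t}$ for the relevant $t$ (Rellich);
\item $\overline R_k\rightharpoonup\overline R$ in ${\rm W}^{1,q}$ and $\overline R_k\to\overline R$ strongly in every ${\rm L}^t$, $t<\infty$, and a.e.
\end{itemize}
The a.e. convergence gives $\overline R\in{\rm SO}(3)$ a.e., and the trace condition passes to the limit. A strong--weak product argument identifies the limits:
\begin{itemize}
\item $\overline R_k^T{\rm D}\varphi_k\rightharpoonup\overline R^T{\rm D}\varphi$ in ${\rm L}^{p'}$ for $p'<p$, and then in ${\rm L}^p$ by boundedness;
\item $\overline R_k^T\operatorname{Curl}\overline R_k\rightharpoonup\overline R^T\operatorname{Curl}\overline R$ in ${\rm L}^q$.
\end{itemize}

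\emph{Minors.} This is the key step: we need $\operatorname{Cof}{\rm D}\varphi_k\rightharpoonup\operatorname{Cof}{\rm D}\varphi$ and $\det{\rm D}\varphi_k\rightharpoonup\det{\rm D}\varphi$ (in ${\rm L}^s$).
\begin{itemize}
\item \emph{Cofactor.} Since $p\geq2$, the divergence (null-Lagrangian) form $\operatorname{Cof}{\rm D}\varphi_k=\operatorname{Curl}(\cdot)$ of quadratic expressions $\varphi^i_k{\rm D}\varphi^j_k$ gives distributional convergence, because $\varphi_k\to\varphi$ strongly in ${\rm L}^{t}$ with $1/t+1/p<1$ is available. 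Combined with the ${\rm L}^r$ (resp. ${\rm L}^{p/2}$) bound, this yields weak convergence.
\item \emph{Determinant.} Use the Piola representation $\det{\rm D}\varphi_k=\tfrac13\div\bigl((\operatorname{Cof}{\rm D}\varphi_k)^T\varphi_k\bigr)$.
\begin{itemize}
\item In regime H), the cofactors converge weakly in ${\rm L}^r$. We need $\varphi_k\to\varphi$ strongly in ${\rm L}^{r'}$. Since ${\rm W}^{1,p}\hookrightarrow\hookrightarrow{\rm L}^t$ for $1/t>1/p-1/3$, this requires $1-1/r>1/p-1/3$, i.e. $1/p+1/r<4/3$, which is H4).
\item In regime H'), $r=p/2$, and the condition becomes $3/p<4/3$, i.e. $p>9/4$, matching H4').
\end{itemize}
\end{itemize}
The products therefore converge distributionally, and the ${\rm L}^s$ bound with $s>1$ upgrades this to weak ${\rm L}^s$ convergence. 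The lifted minors converge as well, again by strong--weak products: $\overline R_k^T\operatorname{Cof}{\rm D}\varphi_k\rightharpoonup\overline R^T\operatorname{Cof}{\rm D}\varphi$, and the determinants are unchanged by the rotation. We expect this step, verifying the exponent bookkeeping of the Piola identity in both regimes, to be the main obstacle.

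\emph{Orientation and lower semicontinuity.} Weak limits of nonnegative functions are nonnegative, so $\det{\rm D}\varphi\geq0$ a.e. The integrand is a nonnegative convex normal integrand (after adding $c_2$), with the value $+\infty$ for $d\leq0$. Ioffe's/De Giorgi's theorem for convex normal integrands under weak ${\rm L}^1$ convergence of $(\overline U_k,\operatorname{Cof}\overline U_k,\det\overline U_k,\alpha_k)$ gives
\[
\widehat{\mathcal I}(\varphi,\overline R)\leq\liminf_k \widehat{\mathcal I}(\varphi_k,\overline R_k)=\inf<\infty .
\]
Finiteness forces $\det{\rm D}\varphi>0$ a.e., because the extended density is $+\infty$ on $\{d\leq0\}$. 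Hence $(\varphi,\overline R)\in\mathcal A_{\rm Cosserat}$ and is a minimizer.
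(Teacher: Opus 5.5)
Your outline follows the paper's proof step by step. The determinant step, which the paper uses in exactly this form, does not hold under the stated hypotheses, so there is a genuine gap.

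\textbf{Where the argument breaks.} Your exponent bookkeeping ($1/p+1/r<4/3$, resp.\ $p>9/4$) is correct, but it only guarantees two things: that the \emph{distributional} Jacobian $\operatorname{Det}{\rm D}\varphi:=\frac13\div\bigl((\Cof{\rm D}\varphi)^T\varphi\bigr)$ is well defined, and that $\operatorname{Det}{\rm D}\varphi_k\to\operatorname{Det}{\rm D}\varphi$ in $\mathcal D'(\Omega)$. To conclude $J=\det{\rm D}\varphi$ you also need $\operatorname{Det}{\rm D}\varphi_k=\det{\rm D}\varphi_k$ for every $k$ and $\operatorname{Det}{\rm D}\varphi=\det{\rm D}\varphi$ for the limit.

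This fails for cavitating maps, which are admissible here. For $a,b>0$ with $a+b=\lambda^3$, let $\psi(y)=(a|y|^3+b)^{1/3}\,y/|y|$ on $B(0,1)$. Then:
\begin{itemize}
\item $\psi(y)=\lambda y$ on $\partial B(0,1)$ and $\det{\rm D}\psi\equiv a$;
\item $|{\rm D}\psi|\sim|y|^{-1}$ and $|\Cof{\rm D}\psi|\sim|y|^{-2}$, so $\psi\in{\rm W}^{1,p}$ for all $p<3$ and $\Cof{\rm D}\psi\in{\rm L}^r$ for all $r<3/2$;
\item yet $\div\bigl((\Cof{\rm D}\psi)^T\psi\bigr)=3a+4\pi b\,\delta_0$.
\end{itemize}
These integrabilities are compatible with H4) (any $2\le p<3$, $r<3/2$ with $1/p+1/r<4/3$, e.g.\ $p=2$, $6/5<r<3/2$) and with H4') for $9/4<p<3$.

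\textbf{The weak continuity itself fails.} Cover $\Omega$, up to a null set, by disjoint closed balls $B(c_i,\rho_i)\subset\Omega$ with $\rho_i<1/k$. Set $\varphi_k(x)=\lambda c_i+\rho_i\psi\bigl((x-c_i)/\rho_i\bigr)$ on $B(c_i,\rho_i)$ and $\overline R_k\equiv\id_3$.
\begin{itemize}
\item $\varphi_k=\lambda x$ on $\partial\Omega$ and $\|\varphi_k-\lambda x\|_{{\rm L}^\infty}\le C/k$.
\item $\int_\Omega|{\rm D}\varphi_k|^p$ and $\int_\Omega|\Cof{\rm D}\varphi_k|^r$ equal $|\Omega|/|B(0,1)|$ times the corresponding integrals for $\psi$. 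Hence $\varphi_k\rightharpoonup\lambda x$ in ${\rm W}^{1,p}$.
\item $(\varphi_k,\overline R_k)\in\mathcal A_{\rm Cosserat}$ has bounded energy for, e.g., $W=|Y|^p+b_0|Z|^r+h(d)+|K|^q$ with $h(d)=d^s+d^{-1}$ (drop the $Z$-term in the reduced case).
\item Nevertheless $\det\overline U_k\equiv a$, whereas the limit has $\det\overline U=\lambda^3>a$.
\end{itemize}
Take $a=b=\lambda^3/2$, so that $\psi$ scales linearly in $\lambda$. Since $3s>3>\max\{p,2r\}$, the term $h(\lambda^3)-h(\lambda^3/2)\approx(1-2^{-s})\lambda^{3s}$ dominates for large $\lambda$. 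Hence $\widehat{\mathcal I}(\lambda x,\id_3)>\liminf_k\widehat{\mathcal I}(\varphi_k,\overline R_k)$, and the lower semicontinuity step fails too. This does not disprove existence, but neither your argument nor the paper's proves it.

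\textbf{What would close the gap.} You need an ingredient that excludes cavitation, for example one of the following.
\begin{itemize}
\item Ball's condition $r\ge p/(p-1)$, i.e.\ $1/p+1/r\le1$, in H4). Then $\operatorname{Det}=\det$ follows from $\div\Cof{\rm D}\varphi=0$ (valid since $p\ge2$), $\Cof{\rm D}\varphi\in{\rm L}^{p'}$, and density of smooth maps.
\item $p\ge3$ in H4').
\item A no-cavitation or invertibility condition built into the admissible class, together with a proof that it is weakly closed.
\end{itemize}

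A minor point: $|{\rm D}\varphi_k|=|\overline U_k|$ holds by orthogonal invariance of the Frobenius norm, not because $|\overline R_k|=\sqrt3$.
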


\begin{proof}
Let $(\varphi_k, \overline R _k)\subset\mathcal A_{\rm Cosserat}$ be a minimizing sequence with uniformly bounded energy and set
\begin{align}
\overline U_k&:= \overline R _k^T{\rm D}\varphi_k,
&J_k&:=\det \overline U_k=\det{\rm D}\varphi_k,
\notag\\
\Gamma_k&:=\operatorname{Cof}\overline U_k
= \overline R _k^T\operatorname{Cof}{\rm D}\varphi_k,
&\alpha_k&:= \overline R _k^T\operatorname{Curl} \overline R _k.
\end{align}
The identities follow from $\det \overline R _k=1$ and
$
\operatorname{Cof}( \overline R _k^TF)
= \overline R _k^T\operatorname{Cof}F.
$

In both regimes, coercivity bounds $(\alpha_k)$ in ${\rm L}^q$. Since
$
|\operatorname{Curl} \overline R _k|=|\alpha_k|,
$
the estimate~\ref{lem:rotation-curl-control} gives a uniform ${\rm W}^{1,q}$ bound for $ \overline R _k$. Because $ \overline R _k$ is pointwise bounded, after extraction there exists $ \overline R \in {\rm W}^{1,q}(\Omega;\mathbb R^{3\times3})$ such that
\begin{align}
 \overline R _k&\rightharpoonup \overline R 
&&\text{in }{\rm W}^{1,q}(\Omega;\mathbb R^{3\times3}),
\notag\\
 \overline R _k&\to \overline R 
&&\text{strongly in }{\rm L}^t(\Omega;\mathbb R^{3\times3})
\quad\text{for every }1\leq t<\infty,
\end{align}
and almost everywhere. Since ${\rm SO}(3)$ is closed, $ \overline R (x)\in {\rm SO}(3)$ almost everywhere.
Moreover,
\begin{align}
\alpha_k
\rightharpoonup
 \overline R ^T\operatorname{Curl} \overline R 
=: \alpha
\quad\text{in }{\rm L}^q(\Omega;\mathbb R^{3\times3}).
\end{align}
Indeed\footnote{If \(m>1\), we denote by $m'=\frac{m}{m-1}$ its H\"older conjugate exponent.}, for every $\Theta\in {\rm L}^{q'}$,
\begin{align}
\int_\Omega\langle\alpha_k,\Theta\rangle\,\mathrm dx
=
\int_\Omega
\langle\operatorname{Curl} \overline R _k, \overline R _k\Theta\rangle\,\mathrm dx,
\end{align}
and $ \overline R _k\Theta\to \overline R \Theta$ strongly in ${\rm L}^{q'}$ by dominated convergence.

\medskip
\noindent\emph{Cofactor-dependent regime.}
By H4), the sequences $({\rm D}\varphi_k)$, $(\operatorname{Cof}{\rm D}\varphi_k)$, $(\det{\rm D}\varphi_k)$ and $(\alpha_k)$ are bounded in ${\rm L}^p,{\rm L}^r,{\rm L}^s,{\rm L}^q$, respectively. The trace condition and Poincar\'e's inequality give a uniform ${\rm W}^{1,p}$ bound for $\varphi_k$. Therefore, along a subsequence,
\begin{align}
\varphi_k&\rightharpoonup\varphi
&&\text{in }{\rm W}^{1,p}(\Omega;\mathbb R^3),
\notag\\
\operatorname{Cof}{\rm D}\varphi_k&\rightharpoonup H
&&\text{in }{\rm L}^r(\Omega;\mathbb R^{3\times3}),
\notag\\
\det{\rm D}\varphi_k&\rightharpoonup J
&&\text{in }{\rm L}^s(\Omega).
\end{align}
Since $p\geq2$, the second-order minors converge distributionally by the
weak continuity of null Lagrangians
\cite{Ball1977,BallCurrieOlver1981}.
Together with the ${\rm L}^r$ bound, $r>1$, this identifies
$
H=\operatorname{Cof}{\rm D}\varphi.
$

If $p<3$, the strict condition
$
\frac1p+\frac1r<\frac43
$
is equivalent to
$
r'<p^*:=\frac{3p}{3-p}.
$
Hence, by the Rellich--Kondrachov theorem \cite{AdamsFournier2003},
\begin{equation}
\varphi_k\to\varphi
\qquad\text{strongly in }L^{r'}(\Omega;\mathbb R^3).
\end{equation}
For $p=3$, the same conclusion holds for every finite exponent $r'$. If $p>3$, the compact Morrey embedding yields \cite{AdamsFournier2003}, for every
$
0\leq\theta<1-\frac3p,
$
that, up to a subsequence,
\begin{equation}
\varphi_k\to\varphi
\qquad\text{strongly in }
C^{0,\theta}(\overline\Omega;\mathbb R^3),
\end{equation}
and therefore strongly in $L^{r'}(\Omega;\mathbb R^3)$. We may consequently pass to the limit in the weak--strong product appearing in the Piola representation of the determinant.
Using the Piola identity,
\begin{align}
\det{\rm D}\varphi_k
=
\frac13\operatorname{div}\left[
(\operatorname{Cof}{\rm D}\varphi_k)^T\varphi_k
\right]
\quad\text{in }\mathcal D'(\Omega),
\end{align}
we may pass to the limit in the weak--strong product and obtain
\begin{align}
\det{\rm D}\varphi_k
\longrightarrow
\det{\rm D}\varphi
\quad\text{in }\mathcal D'(\Omega).
\end{align}
The weak ${\rm L}^s$ limit is unique, so $J=\det{\rm D}\varphi$ almost everywhere.

The strong convergence of $ \overline R _k$ and the weak convergence of the classical minors now yield
\begin{align}
\overline U_k&\rightharpoonup \overline U:= \overline R ^T{\rm D}\varphi
&&\text{in }{\rm L}^p,
\notag\\
\Gamma_k&\rightharpoonup\operatorname{Cof}\overline U
&&\text{in }{\rm L}^r,
\notag\\
J_k&\rightharpoonup\det \overline U
&&\text{in }{\rm L}^s.
\end{align}
For example, for $\Psi\in {\rm L}^{p'}$,
\begin{align}
\int_\Omega\langle \overline U_k,\Psi\rangle\,\mathrm dx
=
\int_\Omega\langle{\rm D}\varphi_k, \overline R _k\Psi\rangle\,\mathrm dx
\longrightarrow
\int_\Omega\langle{\rm D}\varphi, \overline R \Psi\rangle\,\mathrm dx,
\end{align}
and the cofactor convergence is analogous.

By H2)--H3), the extended density is a convex normal integrand. Weak lower semicontinuity therefore gives
\begin{align}
\widehat{\mathcal I}(\varphi, \overline R )
\leq
\liminf_{k\to\infty}
\widehat{\mathcal I}(\varphi_k, \overline R _k).
\end{align}
Its finiteness and the value $+\infty$ for non-positive determinants imply
$
\det{\rm D}\varphi=\det \overline U>0
$
almost everywhere. The trace condition passes to the limit, and hence $(\varphi, \overline R )\in\mathcal A_{\rm Cosserat}$.

\medskip
\noindent\emph{Reduced determinant-dependent regime.}
By H4'), $({\rm D}\varphi_k)$, $(\det{\rm D}\varphi_k)$ and $(\alpha_k)$ are bounded in ${\rm L}^p,{\rm L}^s,{\rm L}^q$, where $p>9/4$ and $s>1$, $q\geq 2$. We again obtain
\begin{align}
\varphi_k\rightharpoonup\varphi
\quad\text{in }{\rm W}^{1,p},
\qquad
\det{\rm D}\varphi_k\rightharpoonup J
\quad\text{in }{\rm L}^s.
\end{align}
Since $p>2$, the cofactors are bounded in the reflexive space ${\rm L}^{p/2}$ and
\begin{align}
\operatorname{Cof}{\rm D}\varphi_k
\rightharpoonup
\operatorname{Cof}{\rm D}\varphi
\quad\text{in }{\rm L}^{p/2}(\Omega;\mathbb R^{3\times3}).
\end{align}

For $2<p<3$, choose $t$ such that
$
\frac{p}{p-2}\leq t<\frac{3p}{3-p}.
$
Such a $t$ exists precisely when
$
\frac{p}{p-2}<\frac{3p}{3-p}
$, i.e., when $p>\frac94.
$
The compact embedding gives $\varphi_k\to\varphi$ strongly in ${\rm L}^t$, and
$
\frac 2p+\frac1t\leq1.
$
The Piola representation therefore yields
\begin{align}
\det{\rm D}\varphi_k
\longrightarrow
\det{\rm D}\varphi
\quad\text{in }\mathcal D'(\Omega).
\end{align}
For $p=3$, one may choose any finite $t$, and for $p>3$ the conclusion follows from Sobolev--Morrey compactness \cite{AdamsFournier2003}. By uniqueness of the distributional and weak ${\rm L}^s$ limits,
$
J=\det{\rm D}\varphi.
$
This proves the weak continuity of the Jacobian needed in the reduced regime without an independent cofactor estimate.

As before,
\begin{align}
\overline U_k\rightharpoonup \overline U:= \overline R ^T{\rm D}\varphi
\quad\text{in }{\rm L}^p,
\qquad
J_k\rightharpoonup\det \overline U
\quad\text{in }{\rm L}^s,
\end{align}
and $\alpha_k\rightharpoonup\alpha$ in ${\rm L}^q$. The convex-normal-integrand hypothesis H2')--H3') gives weak lower semicontinuity, while finite limiting energy gives $\det{\rm D}\varphi>0$ almost everywhere. Thus the limit belongs to $\mathcal A_{\rm Cosserat}$ and minimizes $\widehat{\mathcal I}$.
\end{proof}

\section{Existence result for the relaxed nonlinear elastic model}
\label{s4}\setcounter{equation}{0}

We now impose the weakly closed positive-semidefinite constraint and identify the independent rotation with the polar factor.

\begin{proposition}[Polar recovery and equivalence of admissible classes]
\label{prop:polar-equivalence}
Define
\begin{align}
\mathcal A_{\rm pol}
:=
\Bigl\{
\varphi\in {\rm W}^{1,p}(\Omega;\mathbb R^3)\,|\,
&\operatorname{Tr}_{\Gamma_d}\varphi=\varphi^*,
\quad
\det{\rm D}\varphi>0\ \text{a.e.},
\notag\\[-1mm]
& R:=\operatorname{polar}({\rm D}\varphi)
\in  {\rm W}^{1,q}(\Omega;{\rm SO}(3)) 
\Bigr\}.
\end{align}
Then
$
\varphi\mapsto(\varphi,R)
$
is a bijection from $\mathcal A_{\rm pol}$ onto $\mathcal A_{\rm relax}$, and, therefore, the admissible classes $\mathcal A_{\rm pol}$ onto $\mathcal A_{\rm relax}$ are equivalent. For every corresponding pair,
$
R^T{\rm D}\varphi
=
\sqrt{({\rm D}\varphi)^T{\rm D}\varphi}
\in\operatorname{Sym}^{++}(3)
\text{a.e.}
$
Moreover, if
\begin{align}
\mathcal J(\varphi)
:=
\int_\Omega
W\left(x,
\sqrt{({\rm D}\varphi)^T{\rm D}\varphi},
R^T\operatorname{Curl}\overline R
\right)\,\mathrm dx,
\end{align}
then
$
\mathcal J(\varphi)=\mathcal I(\varphi,R).
$
\end{proposition}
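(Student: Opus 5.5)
The plan is to prove the proposition by pointwise linear algebra, applied almost everywhere, and then transfer the conclusions to the function spaces.

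\emph{Well-definedness.} Let $\varphi\in\mathcal A_{\rm pol}$ and $F={\rm D}\varphi$. Since $\det F>0$ a.e., the polar decomposition $F=RU$ exists a.e., with $R=\operatorname{polar}F\in{\rm SO}(3)$ and $U=\sqrt{F^TF}\in\operatorname{Sym}^{++}(3)$. Both factors depend continuously, hence measurably, on $F$ on ${\rm GL}^+(3)$. By definition of $\mathcal A_{\rm pol}$ we have $R\in{\rm W}^{1,q}(\Omega;{\rm SO}(3))$. Moreover $R^TF=U\in\operatorname{Sym}^{++}(3)\subset\operatorname{Sym}^{+}(3)$ a.e. The trace condition and $\det{\rm D}\varphi>0$ are inherited unchanged. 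Hence $(\varphi,R)\in\mathcal A_{\rm relax}$.

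\emph{Injectivity.} This is immediate, since the first component of $(\varphi,R)$ is $\varphi$.

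\emph{Surjectivity.} This is the key step, namely the polar recovery already sketched after the counterexample. Take $(\varphi,\overline R)\in\mathcal A_{\rm relax}$ and set $S:=\overline R^T{\rm D}\varphi$. Then $S\in\operatorname{Sym}^{+}(3)$ a.e. and $\det S=\det\overline R\,\det{\rm D}\varphi=\det{\rm D}\varphi>0$ a.e.
\begin{itemize}
\item A positive-semidefinite matrix with positive determinant has all eigenvalues nonnegative and none zero, so $S\in\operatorname{Sym}^{++}(3)$ a.e.
\item Then $S^2=S^TS=({\rm D}\varphi)^T\overline R\,\overline R^T{\rm D}\varphi=({\rm D}\varphi)^T{\rm D}\varphi$.
\item Uniqueness of the positive-definite square root gives $S=\sqrt{({\rm D}\varphi)^T{\rm D}\varphi}=U$.
\item Invertibility of $U$ then yields $\overline R={\rm D}\varphi\,U^{-1}=R$ a.e.
\end{itemize}
Consequently $R=\overline R$ as elements of ${\rm W}^{1,q}(\Omega;{\rm SO}(3))$, since Sobolev functions are a.e.-equivalence classes. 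Therefore $\varphi\in\mathcal A_{\rm pol}$ and $(\varphi,\overline R)$ is the image of $\varphi$. This also establishes the identity $R^T{\rm D}\varphi=\sqrt{({\rm D}\varphi)^T{\rm D}\varphi}\in\operatorname{Sym}^{++}(3)$ a.e. for every corresponding pair.

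\emph{Equality of the functionals.} Substitute $\overline U=R^T{\rm D}\varphi=\sqrt{({\rm D}\varphi)^T{\rm D}\varphi}$ and $\alpha=R^T\operatorname{Curl}R$ into $W$. The integrands of $\mathcal J$ and $\mathcal I(\varphi,R)$ then coincide a.e., so $\mathcal J(\varphi)=\mathcal I(\varphi,R)$, including the case where both equal $+\infty$. Here the symbol $\overline R$ in the curvature term of $\mathcal J$ is read as $R$, consistent with the recovery just proved. Measurability of the integrand follows from H3) (respectively H3')) together with measurability of $x\mapsto\sqrt{({\rm D}\varphi)^T{\rm D}\varphi}$.

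\emph{Expected difficulty.} The argument is essentially algebraic. The only delicate point is the exceptional null sets: they must be handled so that the a.e. identities are compatible with $R$ being a ${\rm W}^{1,q}$ class, which suffices as noted above.
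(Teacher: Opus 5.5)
Your proposal is correct and follows the paper's proof. The forward direction uses the polar decomposition. For the converse, $S=\overline R^T{\rm D}\varphi\in\operatorname{Sym}^{+}(3)$ with $\det S=\det{\rm D}\varphi>0$ forces $S\in\operatorname{Sym}^{++}(3)$, so uniqueness of the polar decomposition gives $\overline R=R$, and the equality of the functionals follows by substitution.

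The only difference is that you derive that uniqueness from $S^2=({\rm D}\varphi)^T{\rm D}\varphi$ and uniqueness of the positive-definite square root. You also spell out the a.e.\ identification placing $R$ in ${\rm W}^{1,q}(\Omega;{\rm SO}(3))$, which the paper leaves implicit.
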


\begin{proof}
If $\varphi\in\mathcal A_{\rm pol}$, the polar decomposition gives
$
R^T{\rm D}\varphi
=
\sqrt{({\rm D}\varphi)^T{\rm D}\varphi}
\in\operatorname{Sym}^{++}(3),
$
so $(\varphi,R)\in\mathcal A_{\rm relax}$.

Conversely, let $(\varphi, \overline R )\in\mathcal A_{\rm relax}$ and set
$
S:= \overline R ^T{\rm D}\varphi.
$
Then $S\in\operatorname{Sym}^{+}(3)$ and
$
\det S=\det{\rm D}\varphi>0.
$
Hence $S\in\operatorname{Sym}^{++}(3)$, and
$
{\rm D}\varphi= \overline R\,  S
$
is the unique polar decomposition. Therefore $ \overline R =R:=\operatorname{polar}({\rm D}\varphi)$ and $S=\sqrt{({\rm D}\varphi)^T{\rm D}\varphi}$. The equality of the functionals follows by substitution.
\end{proof}

\begin{theorem}[Existence in the constrained and polar formulations]
\label{thm:elastic3}
Assume that $\mathcal A_{\rm relax}$ contains a pair of finite energy. Under either H1)--H4) or H1')--H4'), the functional $\mathcal I$ admits a minimizer in $\mathcal A_{\rm relax}$. Equivalently, $\mathcal J$ admits a minimizer in $\mathcal A_{\rm pol}$.
\end{theorem}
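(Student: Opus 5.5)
We follow the direct method on $\mathcal A_{\rm relax}$, reusing the compactness and lower semicontinuity argument of Theorem~\ref{thm:elastic2}. The only new ingredient is the weak sequential closedness of the constraint $\overline R^T{\rm D}\varphi\in\operatorname{Sym}^{+}(3)$. Equivalence with the polar formulation then follows from Proposition~\ref{prop:polar-equivalence}.

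First, take a minimizing sequence $(\varphi_k,\overline R_k)\subset\mathcal A_{\rm relax}$ with bounded energy. Since $\mathcal A_{\rm relax}\subset\mathcal A_{\rm Cosserat}$ and $\mathcal I=\widehat{\mathcal I}$ there, the proof of Theorem~\ref{thm:elastic2} applies verbatim in either regime. After extraction we obtain $\varphi_k\rightharpoonup\varphi$ in ${\rm W}^{1,p}$, $\overline R_k\to\overline R$ strongly in every ${\rm L}^t$ and a.e. with $\overline R\in{\rm W}^{1,q}(\Omega;{\rm SO}(3))$, and $\overline U_k\rightharpoonup\overline U=\overline R^T{\rm D}\varphi$ in ${\rm L}^p$. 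The minors converge weakly as well: $\det$ in ${\rm L}^s$, and $\operatorname{Cof}$ in ${\rm L}^r$ or ${\rm L}^{p/2}$ depending on the regime. We also have $\alpha_k\rightharpoonup\alpha$ in ${\rm L}^q$. Lower semicontinuity of the convex normal integrand gives $\mathcal I(\varphi,\overline R)\le\liminf\mathcal I(\varphi_k,\overline R_k)$. Finiteness of the limit energy, combined with the $+\infty$ extension for $d\le 0$, yields $\det{\rm D}\varphi>0$ a.e. The trace condition passes to the limit by weak continuity of the trace.

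The key step is closedness of the cone constraint. The set
\[
\mathcal K:=\{Y\in{\rm L}^p(\Omega;\mathbb R^{3\times3})\,:\,Y(x)\in\operatorname{Sym}^{+}(3)\ \text{a.e.}\}
\]
is convex, because $\operatorname{Sym}^{+}(3)$ is a convex cone. It is also strongly closed, since strong ${\rm L}^p$ convergence gives an a.e. convergent subsequence and $\operatorname{Sym}^{+}(3)$ is closed. By Mazur's lemma, $\mathcal K$ is therefore weakly closed.

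An alternative is to test directly. For every $\Theta\in{\rm L}^{\infty}(\Omega;\mathbb R^{3\times3})$, $\int\langle \overline U_k,\Theta-\Theta^T\rangle=0$ passes to the limit and gives $\overline U\in\operatorname{Sym}(3)$ a.e. For $\xi\in\mathbb R^3$ and $0\le\eta\in{\rm L}^\infty$, $\int\eta\,\langle\overline U_k\xi,\xi\rangle\ge0$ passes to the limit; using a countable dense set of $\xi$ gives $\overline U\in\operatorname{Sym}^{+}(3)$ a.e.

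Since $\overline U_k\in\mathcal K$ and $\overline U_k\rightharpoonup\overline U$, we get $\overline R^T{\rm D}\varphi\in\operatorname{Sym}^{+}(3)$ a.e. Hence $(\varphi,\overline R)\in\mathcal A_{\rm relax}$, and it is a minimizer. Proposition~\ref{prop:polar-equivalence} then gives $\overline R=\operatorname{polar}({\rm D}\varphi)$, and the bijection $\varphi\mapsto(\varphi,R)$ with $\mathcal J(\varphi)=\mathcal I(\varphi,R)$ transfers minimality to $\mathcal J$ on $\mathcal A_{\rm pol}$.

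The main obstacle is conceptual rather than technical. The constraint is only semidefinite, so closedness alone cannot give strict positivity. Strict positivity must be recovered from $\det{\rm D}\varphi>0$, which in turn depends on the weak continuity of the determinant established in Theorem~\ref{thm:elastic2}, including the reduced regime with $p>9/4$, together with the singular extension of the density. We therefore check carefully that the limit of $\det\overline U_k$ is identified as $\det\overline U=\det{\rm D}\varphi$ before invoking lower semicontinuity.
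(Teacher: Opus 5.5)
Your proposal is correct and follows essentially the same route as the paper. You reuse the compactness, minor-identification and lower-semicontinuity argument of Theorem~\ref{thm:elastic2}, show that the set of $\operatorname{Sym}^{+}(3)$-valued $\mathrm{L}^p$ fields is convex and strongly closed, hence weakly closed, and then transfer the minimizer to $\mathcal A_{\rm pol}$ via Proposition~\ref{prop:polar-equivalence}; your direct testing argument is a valid alternative for the closedness step.
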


\begin{proof}
Let $(\varphi_k, \overline R _k)\subset\mathcal A_{\rm relax}$ be a minimizing sequence. All compactness, identification of minors and lower-semicontinuity arguments from Theorem~\ref{thm:elastic2} apply without change. Thus, along a subsequence,
\begin{align}
\varphi_k&\rightharpoonup\varphi
&&\text{in }{\rm W}^{1,p}(\Omega;\mathbb R^3),
\notag\\
 \overline R _k&\to \overline  R 
&&\text{a.e. and strongly in every finite }{\rm L}^a,
\notag\\
\overline U_k:= \overline R _k^T{\rm D}\varphi_k
&\rightharpoonup
\overline U:= \overline R ^T{\rm D}\varphi
&&\text{in }{\rm L}^p(\Omega;\mathbb R^{3\times3}),
\end{align}
and the limit has positive determinant and finite energy.

For every $k$,
$
\overline U_k(x)\in\operatorname{Sym}^{+}(3)
$
almost everywhere. The set
\begin{align}
\left\{
V\in {\rm L}^p(\Omega;\mathbb R^{3\times3})\, |\,
V(x)\in\operatorname{Sym}^{+}(3)\ \text{a.e.}
\right\}
\end{align}
is convex and strongly closed, hence weakly closed. Therefore
$
\overline U= \overline R ^T{\rm D}\varphi\in\operatorname{Sym}^{+}(3)
$
almost everywhere. The trace condition also passes to the limit, so $(\varphi, \overline R )\in\mathcal A_{\rm relax}$. Weak lower semicontinuity gives minimality.

Proposition~\ref{prop:polar-equivalence} identifies $ \overline R $ with $\operatorname{polar}({\rm D}\varphi)$ and transfers the minimizer bijectively to $\mathcal A_{\rm pol}$.
\end{proof}

The theorem establishes local orientation preservation through $\det{\rm D}\varphi>0$ almost everywhere. Global injectivity would require an additional condition, such as a suitable
Ciarlet--Ne\v{c}as-type constraint \cite{CiarletNecas1987}, and is not
addressed here.

\section{Representative energy classes}\label{s6}\setcounter{equation}{0}

All examples are understood with the value $+\infty$ for non-positive determinants and must satisfy the measurability and lower-semicontinuity requirements H3) or H3'). The displayed growth conditions are sufficient and are not claimed to be optimal. To keep the main argument focused, we present representative families: some reduced determinant-dependent models and some genuinely cofactor-dependent models. Further admissible examples, together with models that fall outside the hypotheses without additional modification, are collected in Appendix~\ref{app:further-examples}.

Throughout this section, the volumetric function
$
h:(0,\infty)\to\mathbb R
$
is assumed to be convex and lower semicontinuous. Moreover, there exist
$c_h,\gamma_h>0$, $C_h\geq0$, $s>1$, and $\beta>0$ such that
\[
h(d)
\geq
c_h \,d^s-C_h
\qquad\text{for all }d>0.
\]
The function is extended by the value $+\infty$ for $d\leq0$, $\lim _{r\to 0} h(d)=+\infty$ such that the extension is convex.

For each of the stretching energies listed below, the curvature contribution may be chosen independently in the canonical form
\begin{align}
W_{\rm curv}(x,K)
=
a(x)|K|^q,
\qquad q\geq 2,
\end{align}
where $a:\Omega\to\mathbb R$ is measurable and satisfies
$
0<a_0\leq a(x)\leq a_1<+\infty
\ \text{for almost every }x\in\Omega.
$
In particular, one may take
\begin{align}
W_{\rm curv}(K)
=
{\rm L}_{\rm c}^{q}|K|^q,
\qquad {\rm L}_{\rm c}>0.
\end{align}
Since the mapping
$
K\longmapsto |K|^q
$
is convex and continuous for $q\geq1$, and strictly convex for $q\geq 2$, this choice satisfies the convexity, lower-semicontinuity and $q$-coercivity assumptions imposed on the curvature energy in the existence theorems. Moreover,
\begin{align}
\bigl| \overline R ^T\Curl \overline R \bigr|
=
\bigl|\Curl \overline R \bigr|,
\end{align}
because $ \overline R \in{\rm SO}(3)$. Hence, this curvature term provides precisely the $L^q$-control of $\Curl \overline R $ required for the compactness of the microrotation field. Consequently, for the examples below it remains only to specify suitable $W_{\rm stretch}$ energies.

\subsection{Reduced determinant-dependent models}

The following energies have the form
\begin{align}
        W(x,\overline U, \overline R ^T\Curl \overline R )
        =
        W_{\rm stretch}\bigl(x,\overline U,\det \overline U\bigr)
        +
        W_{\rm curv}\bigl(x, \overline R ^T\Curl \overline R \bigr).
\end{align}
They do not depend on $\Cof \overline U$ as an independent variable. Therefore they fit
the reduced hypotheses $H1')-H4')$, provided
$
        p>\frac{9}{4},
           $ $
        s>1,
           $ $
        q\geq 2.
$

\begin{itemize}

\item \textbf{Regularized Biot energy.}
The Biot energy naturally depends on the stretch itself, not on the cofactor.
An admissible regularized density is
\begin{align}
        W_{\rm stretch}\bigl(x,\overline U,\det \overline U\bigr)
        =
        \mu  | \overline U-\id_3 | ^2
        +
        \varepsilon | \overline U-\id_3 | ^p
        +
        h(\det \overline U),
\end{align}
where
$
        \mu\geq0,
             $ $
        \varepsilon>0,
            $ $
        p>\frac{9}{4}. 
$
The quadratic term is the classical Biot contribution, while the $p$-term gives
the coercivity required in $H4')$.

\item \textbf{Compressible regularized Neo-Hookean decoupled type energy.}
The compressible Neo-Hookean energy is naturally of reduced type: it depends on
the first invariant and on the determinant, but not independently on the
cofactor. In the present setting one may take
\begin{align}
        W_{\rm stretch}\bigl(x,\overline U,\det \overline U\bigr)
        =
        \mu |\overline U|^p
        +
        h(\det \overline U),
        \qquad
        \mu>0,
        \qquad
        p>\frac{9}{4}.
\end{align}
We call this energy \emph{decoupled} since the stretch contribution does not
depend on the independent microrotation $\overline R$. Indeed,
\begin{align}
        W_{\rm stretch}\bigl(x,\overline U,\det \overline U\bigr)
        &=
        \mu |\overline R^T{\rm D}\varphi|^p
        +
        h\bigl(\det(\overline R^T{\rm D}\varphi)\bigr)
        =
        \mu |{\rm D}\varphi|^p
        +
        h(\det{\rm D}\varphi),
\end{align}
and {\it the coupling is only in the admissible set}.
Thus, the stretch contribution is already polyconvex in the classical
deformation gradient. However, for $9/4<p<3$ it does not provide an
independent coercive bound on $\Cof{\rm D}\varphi$ of the type entering the
standard Ball existence theory. The reduced existence result proved here
shows that such an independent cofactor coercivity is not needed: the
condition $p>9/4$, together with the determinant control, is sufficient to
identify the weak Jacobian limit. After imposing the polar constraint, this
also yields existence for the corresponding energy supplemented by a
curvature term depending on
$R=\operatorname{polar}({\rm D}\varphi)$.

The purely quadratic choice $p=2$ corresponds to the classical first-invariant
part, but it does not satisfy $H4')$, which requires $p>\frac{9}{4}$. Therefore, if one
wants to keep the classical quadratic term, one should use the regularized form
\begin{align}
        W_{\rm stretch}\bigl(x,\overline U,\det \overline U\bigr)
        =
        \mu  | \overline U | ^2
        +
        \varepsilon  | \overline U | ^p
        +
        h(\det \overline U),
        \qquad
        \mu\geq0,
        \qquad
        \varepsilon>0,
        \qquad
        p>\frac{9}{4}.
\end{align}

\item \textbf{Anisotropic convex stretch energies.}
Let $\mathbb C(x)$ be a measurable uniformly positive definite fourth-order
tensor on all of $\mathbb{R}^{3\times 3}$. Then
\begin{align}
        W_{\rm stretch}\bigl(x,\overline U,\det \overline U\bigr)
        =
        \left\langle
        \mathbb C(x)(\overline U-\id_3),\overline U-\id_3
        \right\rangle^{p/2}
        +
        h(\det \overline U)
\end{align}
satisfies $H1')-H4')$ for
$
        p>\frac{9}{4}.
$

\end{itemize}

The regularized Biot model displays the principal stretch-based application of the reduced theorem, whereas the regularized Neo-Hookean model shows that the result is not tied to a shifted strain measure.

\subsection{Cofactor-dependent models}

\begin{itemize}
\item \textbf{Cofactor-dependent Biot-type coupled energy.}
An example for which the independent microrotation cannot
be eliminated from the stretch contribution is
\begin{align}
W_{\rm stretch}
\bigl(x,\overline U,\Cof\overline U,\det\overline U\bigr)
={}&
\mu|\overline U-\id_3|^p
+
b|\Cof\overline U-\id_3|^r
+
h(\det\overline U),
\end{align}
where
$
\mu,b>0, $ $
p\geq2,$ $
r>1,$  $
\frac1p+\frac1r<\frac43,
$
and $h:(0,\infty)\to\mathbb R$ is convex, with the corresponding
orientation-preserving extension by $+\infty$ for non-positive
determinants and with the growth required in H4).

Indeed,
\begin{align}
|\overline U-\id_3|
&=
|{\rm D}\varphi-\overline R|,
\qquad 
|\Cof\overline U-\id_3|
=
|\Cof{\rm D}\varphi-\overline R|,
\end{align}
so that, in contrast to the decoupled terms
$|\overline U|^p$ and $|\Cof\overline U|^r$, the microrotation
$\overline R$ cannot be removed from the energy.

After imposing
$\overline R^T{\rm D}\varphi\in\operatorname{Sym}^+(3)$ and
$\det{\rm D}\varphi>0$, one has
$\overline R=R=\operatorname{polar}({\rm D}\varphi)$ and therefore
$\overline U=U=\sqrt{({\rm D}\varphi)^T{\rm D}\varphi}$. Consequently,
the corresponding deformation-only stretch energy is
\begin{align}
W_{\rm stretch}
=
\mu|U-\id_3|^p
+
b|\Cof U-\id_3|^r
+
h(\det U),
\end{align}
supplemented in the full model by the curvature energy of the polar factor.
This provides a cofactor-dependent analogue of the regularized Biot model
which is genuinely covered by the Cosserat lifting mechanism rather than
reducing to a classical polyconvex energy in ${\rm D}\varphi$.

\item \textbf{Mooney-Rivlin  decoupled  type energy.}
Mooney-Rivlin energies are the natural examples where the cofactor variable
appears. A lifted Cosserat-polyconvex version is
\begin{align}
        W_{\rm stretch}\bigl(x,\overline U,\Cof \overline U,\det \overline U\bigr)
        =
        a | \overline U | ^p
        +
        b | \Cof \overline U | ^r
        +
        h(\det \overline U),
\end{align}
where
$
        a,b>0,
              $ $
        p\geq2,
         $ $
        r>1,
         $ $
        \frac1p+\frac1r<\frac43.
$
The classical quadratic choice
$
        p=r=2
$
is admissible.

 {
This energy is decoupled at the level of the stretch density, since its
dependence on $\overline R$ disappears. For $p=r=2$ it reduces to the
classical quadratic Mooney--Rivlin-type polyconvex form, while general
$p$ and $r$ give a corresponding power-growth extension. The deformation
and microrotation nevertheless remain coupled through the admissible set.
If the constraint coupling ${\rm D}\varphi$ and $\overline R$ were removed,
the corresponding Cosserat minimization problem would become completely
decoupled, a situation which is not considered here.}

 {This and the following decoupled examples are included primarily to show
that the present framework contains, and in the presence of the
rotational-curvature term extends, the classical existence theory of Ball
for isotropic nonlinear elasticity.}

The Ciarlet-Geymonat class \cite{CiarletGeymonat1982} is naturally cofactor-dependent. In the present
variables, a representative density is
\begin{align}
        W_{\rm stretch}\bigl(x,\overline U,\Cof \overline U,\det \overline U\bigr)
        =
        a | \overline U | ^p
        +
        b | \Cof \overline U | ^r
        +
        c(\det \overline U)^s
        -
        \gamma\log\det \overline U,
\end{align}
where
$
        a, b, c, \gamma>0,
            $ $
        p\geq2,
             $ $
        r>1,
             $ $
        s>1,
           $ $
        \frac1p+\frac1r<\frac43.
$
The term $-\log\det \overline U$ gives singular behavior as $\det \overline U\to0^+$, while
the powers provide the coercivity required by $H4)$.

One may also use the normalized Ciarlet-Geymonat form
\begin{align}
\begin{aligned}
        W_{\rm stretch}\bigl(x,\overline U,\Cof \overline U,\det \overline U\bigr)
        ={}&
        \frac{\mu}{2}
        \left(
         | \overline U | ^2
        -
        2\log\det \overline U
        -
        3
        \right)
        \\
        &+
        b | \Cof \overline U | ^r
        +
        \frac{\lambda}{4}
        \left(
        (\det \overline U)^2
        -
        2\log\det \overline U
        -
        1
        \right)
        \\
        &\quad+
        \varepsilon |\overline U|^p
        +
        c(\det \overline U)^s,
\end{aligned}
\end{align}
with
$
\mu>0,\ \lambda>0,\  b,\varepsilon,c>0,
\ p\geq2,\  r>1,\  s>1,
\ \frac1p+\frac1r<\frac43 .
$
If the quadratic term $|\overline U|^2$ is retained as the leading $U$-growth, one may
take $p=2$, provided the cofactor exponent satisfies
$
        r>\frac65.
$

\item \textbf{Anisotropic stretch-cofactor energy.}
Let $\mathbb C(x)$ and $\mathbb D(x)$ be measurable uniformly positive
definite fourth-order tensors on whole matrix space  $\mathbb R^{3\times 3}$. Then
\begin{align}
\begin{aligned}
        W_{\rm stretch}\bigl(x,\overline U,\Cof \overline U,\det \overline U\bigr)
        ={}&
        \left\langle
        \mathbb C(x)(\overline U-\id_3),\overline U-\id_3
        \right\rangle^{p/2}
       +
        \left\langle
        \mathbb D(x)(\Cof \overline U-\id_3),\Cof \overline U-\id_3
        \right\rangle^{r/2}\notag
        \\&+
        h(\det \overline U)
\end{aligned}
\end{align}
satisfies $H1)-H4)$ if
$
        p\geq2,
             $ $
        r>1,
           $ $
        \frac1p+\frac1r<\frac43.
$

\end{itemize}

These  examples already show the two distinct mechanisms covered by the theory: determinant-dependent stretch energies without an independent cofactor estimate, and classical polyconvex-type energies with genuine cofactor growth.

\subsection{Models not covered without additional modification}

\begin{itemize}

\item The purely quadratic reduced Neo-Hookean energy
\begin{align}
        W_{\rm stretch}\bigl(x,\overline U,\det \overline U\bigr)
        =
        \mu  | \overline U | ^2+h(\det \overline U)
\end{align}
does not satisfy $H4')$, because $H4')$ requires $p>\frac{9}{4}$. It becomes
admissible after adding a regularizing term
$
        \varepsilon | \overline U | ^p,
      $ $
        \varepsilon>0,
       $ $
        p>\frac{9}{4}.
$

\item {Saint Venant--Kirchhoff type energy.}
Expressed in terms of the stretch tensor, the Saint Venant--Kirchhoff energy reads
\begin{align}
W_{\rm SVK}(\overline U)
=
\frac{\mu}{4}|\overline U^2-\id_3|^2
+
\frac{\lambda}{8}
\bigl[\tr(\overline U^2-\id_3)\bigr]^2 .
\end{align}
This energy is not globally convex in $\overline U$. Indeed, along
$\overline U=t\,\id_3$, $t>0$,
$
W_{\rm SVK}(t\,\id_3)
=
\left(
\frac{3\mu}{4}
+
\frac{9\lambda}{8}
\right)(t^2-1)^2,
$
whose second derivative is negative for $0<t<1/\sqrt{3}$, provided
$\mu>0$ and $2\mu+3\lambda>0$. Hence the Saint Venant--Kirchhoff energy
does not satisfy H2) or H2') and is not directly covered by the present
framework.

\item A reduced energy independent of $\Cof \overline U$ does not satisfy $H1)-H4)$,
because $H4)$ requires coercivity in $|\Cof \overline U|^r$. Such an energy belongs to
the reduced class $H1')-H4')$.

\item A linear cofactor term
$
         | \Cof \overline U | 
$
is convex, but does not fit $H4)$, because $H4)$ requires
$
        r>1.
$
\item  {
Energies depending explicitly on the inverse of the relative stretch are not
directly covered by the present framework. For instance, one may consider
\begin{align}
W_{\rm stretch}\bigl(x,\overline U,\det\overline U\bigr)
=
\left\langle
\mathbb C(x)
\bigl(\overline U^{-1}-\id_3\bigr),
\overline U^{-1}-\id_3
\right\rangle^{p/2}
+
h(\det\overline U),
\end{align}
whenever $\overline U$ is invertible. Although
$
\overline U^{-1}
=
\frac{(\Cof\overline U)^T}{\det\overline U},
$
the resulting dependence on the lifted variables is not, in general, convex.
Indeed, already in the scalar case the prototype
$
t\mapsto \left(\frac1t-1\right)^2,$ $
 t>0,
$
is not globally convex. Moreover, the inverse-stretch term does not provide
the coercive growth in $|\overline U|^p$ required by H4'), since
$|\overline U^{-1}-\id_3|$ remains bounded along
$\overline U=t\,\id_3$ as $t\to\infty$.
Consequently, such inverse-stretch energies do not satisfy H2)--H4) or
H2')--H4') without additional convexifying and coercive regularization.}

\item  { 
The quadratic Hencky energy is not covered by the present framework. A
natural lifted formulation is obtained by considering the extended-valued
density
\begin{align}
W_{\rm stretch}(\overline U)
=
\begin{cases}
\displaystyle
\mu |\!\log \overline U|^2
+
\frac{\lambda}{2}
\bigl(\tr\log \overline U\bigr)^2,
&
\overline U\in\operatorname{Sym}^{++}(3),
\\[2mm]
+\infty,
&
\text{otherwise}.
\end{cases}
\end{align}
Thus, the occurrence of the matrix logarithm is not by itself an
obstruction. Indeed, in the constrained class considered in the present
paper,
$
\overline U\in\operatorname{Sym}^{+}(3),
$ $
\det\overline U>0,
$
implies
$\overline U\in\operatorname{Sym}^{++}(3)$, so that
$\log\overline U$ is well defined.
The obstruction is instead the lack of global convexity in the lifted
variable $\overline U$. Already along the spherical path
$\overline U=t\,\id_3$, $t>0$, the quadratic Hencky energy contains a
positive multiple of $(\log t)^2$, and
$
\frac{{\rm d}^2}{{\rm d}t^2}(\log t)^2
=
\frac{2(1-\log t)}{t^2},
$
which is negative for $t>e$. Hence the extended Hencky density is not
convex in $\overline U$ and therefore does not satisfy H2) or H2').
Consequently, the standard quadratic Hencky energy is not covered by the
present existence theory.}
 {\item \textbf{Exponentiated logarithmic and exponentiated Hencky energies.}
Exponentiation of the logarithmic strain does not, in general, bring the
corresponding energies into the class considered here. For example, the
isotropic logarithmic energy
\begin{align}
W_{\rm exp}(\overline U)
=
\exp\!\left(k|\log\overline U|^2\right),
\qquad k>0,
\end{align}
may be defined as an extended-valued function on
$\operatorname{Sym}^{++}(3)$, with value $+\infty$ outside this cone.
However, the corresponding elastic energy
$
F\mapsto
\exp\!\left(k|\log U|^2\right),
$ $
U=\sqrt{F^TF},
$
is known to fail global rank-one convexity already in dimension two,
and hence also in dimension three; see
\cite{NeffGhibaLankeit}.
The same distinction is relevant for the exponentiated Hencky energy
introduced in \cite{NeffGhibaLankeit},
\begin{align}
W_{\rm eH}(U)
=
\frac{\mu}{k}
\exp\!\left(k|\dev_n\log U|^2\right)
+
\frac{\kappa}{2\widehat k}
\exp\!\left(
\widehat k[\tr(\log U)]^2
\right).
\end{align}
In dimension two this energy possesses substantially improved convexity
properties: it is rank-one convex for
$k\geq\frac14$ and $\widehat k\geq\frac18$, and suitable parameter
ranges even yield planar polyconvexity \cite{ghiba2015exponentiated,NeffGhibaPoly}. In dimension three, however,
global rank-one convexity is lost; 
$
F\mapsto
\exp\!\left(k|\!\dev_3\log U|^2\right)
$
is not rank-one convex for any $k>0$.}

 {
These results concern rank-one convexity or polyconvexity with respect
to the deformation gradient $F$ and should be distinguished from the
hypothesis used in the present paper, which requires convexity with
respect to the independent lifted variables. In particular, the
logarithmic representations above do not provide the global convexity
in $\overline U$ required by H2) or H2'). Consequently, the
three-dimensional exponentiated logarithmic and exponentiated Hencky
models are not directly covered by the present existence framework.}

\item Another energy not covered by our approach is given by the following energy functional \cite{Neff_Habil04}	\begin{align}\label{minprob}
		\dd\int_{\Omega }\Big\{&
       \mu 
        \left\|\sym(
       \overline{R}^T{\rm D}\varphi
        -
        \id_3)
        \right\|^2
        +{\rm L}_{\rm c}^q \,\|\overline R^T\Curl \overline R\|^q        \Big\}{\rm d}{\rm x}, \qquad 2\leq q<3.	\end{align} This energy is not excluded because of the curvature exponent
$2\leq q<3$, which is compatible with the present assumptions.
The obstruction is instead the stretch term:
$\|\sym(\overline R^T{\rm D}\varphi-\id_3)\|^2$ does not provide
coercive control of the full relative deformation
$\overline U=\overline R^T{\rm D}\varphi$. Hence neither H4) nor H4')
is satisfied; in addition, the quadratic growth $p=2$ lies below the
threshold $p>9/4$ required in the reduced regime.  {Existence results for this type of degenerate quadratic Cosserat energy
rely on generalized Korn inequalities of a different type; see
\cite{neff2002korn}}.
\end{itemize}

\section{Conclusions}

We have developed an existence theory for nonlinear elastic energies formulated through an independent Cosserat microrotation and the relative stretch $ \overline U =  \overline R ^T{\rm D}\varphi$. The curvature variable $  \overline R ^T\operatorname{Curl}  \overline R $ controls the full first-order variation of the rotation field and supplies the strong compactness needed to pass to the limit in the lifted stretch variables.
 Two complementary regimes were treated. In the cofactor-dependent regime, convexity and coercivity are imposed in $(  \overline U ,\operatorname{Cof}  \overline U ,\det  \overline U )$. In the reduced regime, no independent cofactor coercivity is assumed; the threshold $p>9/4$ allows identification of the weak Jacobian limit through the Piola identity and compact Sobolev convergence. The singular determinant dependence then preserves local orientation in the limit.
 For the constrained problem, the closed-cone condition $  \overline R ^T{\rm D}\varphi\in\operatorname{Sym}^{+}(3)$ is weakly stable. Combined with $\det{\rm D}\varphi>0$, it implies $  \overline R =R=\operatorname{polar}({\rm D}\varphi)$ for every admissible pair. The lifted problem is consequently equivalent to a deformation problem whose energy contains the curvature of the polar factor. The result should therefore be interpreted as an existence theorem for stretch-based energies with selective rotational regularization (the couple--stress model \cite{Toupin62,sky2026cosserat}). It does not establish existence for the unregularized first-gradient Biot energy, does not control the variation of the stretch tensor, and does not by itself imply global injectivity. Within these precise limits, the Cosserat lifting provides a robust variational route for several Biot-type, reduced Neo-Hookean-type and cofactor-dependent stretch-polyconvex models.

The choice of the curvature variable $ \overline R^{T}\operatorname{Curl}\overline R$, rather than the full gradient ${\rm D} \overline R$, is deliberate. It preserves the intrinsic Cosserat structure of the model and provides an objective measure of the spatial incompatibility of the microrotation. At the same time, for ${\rm SO}(3)$-valued tensor fields in
${\rm W}^{1,q}(\Omega)$, $q\geq2$, control of
$\operatorname{Curl}\overline R$ controls the full gradient in
${\rm L}^q$ within the ${\rm W}^{1,q}$-admissible class. This choice is also motivated by the perspective of dimensional reduction. Curvature energies expressed through $ \overline R^{T}\operatorname{Curl} \overline R$ admit decompositions that are naturally compatible with the tangential and transverse structure of thin domains and therefore provide a suitable starting point for the rigorous derivation of lower-dimensional Cosserat theories for shells, plates, and rods. Thus, the present formulation is intended not only as an existence framework for three-dimensional models, but also as a basis for subsequent dimension-reduction results preserving the geometric and variational structure of the rotational curvature.
 
 Nevertheless, if one considers a broader class of energies depending
directly on the full gradient ${\rm D} \overline R$, the present analysis suggests
a corresponding multifield extension. The Cosserat-polyconvexity
condition may then be replaced by a multifield polyconvexity assumption
requiring the stored-energy density to admit a convex representation
in terms of all minors of the combined gradient
\[
\mathbb G(\varphi, \overline R)
:=
{\rm D}\bigl(\varphi,\operatorname{vec} \overline R\bigr)
=
\begin{pmatrix}
{\rm D}\varphi\\
{\rm D}(\operatorname{vec} \overline R)
\end{pmatrix}
\in\mathbb R^{12\times3},\qquad 
\begin{pmatrix}
{\rm D}\varphi\\
{\rm D}(\operatorname{vec} \overline R)
\end{pmatrix}\approx
\begin{pmatrix}
{\rm D}\varphi\\
\Curl \overline R,
\end{pmatrix}
\]
including the corresponding mixed minors, see Mariano and Modica \cite{Mariano08a}. 
Although, for ${\rm SO}(3)$-valued fields, the Curl--Grad relation
provides control of the full gradient ${\rm D}\overline R$ through
$\Curl\overline R$, the mixed minors appearing in the multifield
polyconvexity framework of Mariano and Modica are minors of the full
combined gradient
$
{\rm D}\bigl(\varphi,\operatorname{vec}\overline R\bigr)
$. 
They cannot be obtained by a literal substitution of
${\rm D}(\operatorname{vec}\overline R)$ by $\Curl\overline R$.
A corresponding formulation in terms of the Curl curvature would
therefore require a separate analysis of the associated mixed minors.

However, under suitable coercivity
and integrability assumptions guaranteeing weak compactness and weak
continuity of these minors, the direct-method arguments developed in the present paper
can be adapted to this enlarged class of energies.
This extension would incorporate into the multifield polyconvex framework of Mariano and Modica \cite{Mariano08a} the orientation-preserving, weak-closure, and polar-recovery mechanisms established in the present work. 
Building on the approach developed in the present paper, a work in preparation aims to compare and combine, in a complementary manner, the multifield existence framework of Mariano and Modica \cite{Mariano08a} with the finite-strain micromorphic existence theory of Neff \cite{Neff_Habil04,NeffMicromorphic2006}.  

\begin{footnotesize}
	\bibliographystyle{plain} 

 \end{footnotesize}

\begin{footnotesize}
\appendix
\section{Further admissible energies and boundary cases}\label{app:further-examples}\setcounter{equation}{0}

This appendix records additional energy classes covered by the hypotheses and several instructive models that require further modification. The notation and the assumptions on the volumetric term $h$ are those of Section~\ref{s6}.

\subsection{Additional reduced determinant-dependent energies}

The following models satisfy H1')--H4') under the exponent assumptions stated in Section~\ref{s6}.

\begin{itemize}

\item \textbf{Pure $p$-Biot energy.}
One may also consider
\begin{align}
        W_{\rm stretch}\bigl(x,\overline U,\det \overline U\bigr)
        =
        \mu  | \overline U-\id_3 | ^p
        +
        h(\det \overline U),
        \qquad
        \mu>0,
        \qquad
        p>\frac{9}{4}.
\end{align}
This is the simplest reduced Cosserat-Biot model satisfying $H1')-H4')$.

\item \textbf{Matrix power-law stretch energies inspired by Ogden-type growth.}
Matrix power-law energies inspired by Ogden-type growth may be written
in the reduced decoupled form
\begin{align}
        W_{\rm stretch}\bigl(x,\overline U,\det \overline U\bigr)
        =
        \sum_{i=1}^N c_i  | \overline  U | ^{m_i}
        +
        h(\det \overline U),
\end{align}
or, in a coupled Biot-shifted form,
\begin{align}
        W_{\rm stretch}\bigl(x,\overline U,\det \overline U\bigr)
        =
        \sum_{i=1}^N c_i  | \overline U-\id_3 | ^{m_i}
        +
        h(\det \overline U),
\end{align}
where
$
        c_i>0,
$ $
        m_i\geq1.
$
The hypotheses $H1')-H4')$ are satisfied provided that at least one exponent
gives the required growth
$
        \max_i m_i=p>\frac{9}{4}.
$

\item \textbf{Schatten-norm Ogden-inspired decoupled energies.}
A coordinate-free  version is
\begin{align}
        W_{\rm stretch}\bigl(x,\overline U,\det \overline U\bigr)
        =
        \sum_{i=1}^N c_i  | \overline U | _{S_{m_i}}^{m_i}
        +
        h(\det \overline U),
\end{align}
where $ | \cdot | _{S_m}$ denotes the Schatten norm. Since
$
        \overline U\mapsto  |\overline  U | _{S_m}^{m}
$
is convex for $m\geq1$, this class satisfies $H1')-H4')$ whenever
$
        c_i>0,
         $ $
        \max_i m_i=p>\frac{9}{4}.
$

\item \textbf{Fung-type exponential  energies.}
A convex exponential stiffening model is
\begin{align}
        W_{\rm stretch}\bigl(x,\overline U,\det \overline U\bigr)
        =
        a\left[
        \exp\left(\eta | \overline U-\id_3 | ^p\right)-1
        \right]
        +
        h(\det\overline  U),
\end{align}
with
$
        a,\eta>0,
           $ $
        p>\frac{9}{4}.
$
The exponential term is convex because it is the composition of the increasing
convex function $t\mapsto e^{\eta t}-1$ with the convex function
$\overline U\mapsto |\overline U-\id_3|^p$.

\item \textbf{Power-law stiffening energies.}
A flexible stretch-stiffening class is
\begin{align}
        W_{\rm stretch}\bigl(x,\overline U,\det \overline U\bigr)
        =
        a | \overline U-\id_3 | ^p
        +
        \sum_{j=1}^N a_j  | \overline U-\id_3 | ^{p_j}
        +
        h(\det\overline  U),
\end{align}
where
$
        a>0,
            $ $
        p>\frac{9}{4},
       $ $
        a_j\geq0,
         $ $
        p_j\geq1.
$
The first term provides the required coercivity in $\overline U$.

\item \textbf{Inhomogeneous preferred stretch energies.}
Let
$
        U_0\in {\rm L}^\infty(\Omega;\mathbb R^{3\times3})
$
be a prescribed preferred stretch and let $a(x)$ be measurable with
$
        0<a_0\leq a(x)\leq a_1<+\infty .
$
Then
\begin{align}
        W_{\rm stretch}\bigl(x,\overline U,\det \overline U\bigr)
        =
        a(x) | \overline U-U_0(x) | ^p
        +
        h(\det \overline U),
        \qquad
        p>\frac{9}{4},
\end{align}
satisfies $H1')-H4')$.

\item \textbf{Deviatoric-spherical stretch energies.}
Since $\overline U\mapsto \dev \ \overline U$ and $\overline U\mapsto \tr \overline U$ are linear, the density
\begin{align}
        W_{\rm stretch}\bigl(x,\overline U,\det \overline U\bigr)
        =
        \mu | \dev (\overline U-\id_3) | ^p
        +
        \kappa | \tr (\overline U-\id_3)| ^p
        +
        h(\det \overline U)
\end{align}
is convex in $\overline U$. If
$
        \mu,\kappa>0,
        \      $ $
        p>\frac{9}{4},
$
then the full norm $|\overline U|^p$ is controlled and $H1')-H4')$ are satisfied.

\item \textbf{Fibre-reinforced stretch energies.}
Let $a_1,\ldots,a_N\in\mathbb S^2$ be preferred material directions. The
density
\begin{align}
        W_{\rm stretch}\bigl(x,\overline U,\det U\bigr)
        =
        \mu  | \overline U-\id_3 | ^p
        +
        \sum_{i=1}^N c_i  | \overline Ua_i | ^{m_i}
        +
        h(\det \overline U)
\end{align}
satisfies $H1')-H4')$ if
$
        \mu>0,
       $ $
        p>\frac{9}{4},
        $ $
        c_i\geq0,
          $ $
        m_i\geq1.
$
Threshold-type fibre activation is also admissible:
\begin{align}
        W_{\rm stretch}\bigl(x,\overline U,\det \overline U\bigr)
        =
        \mu | \overline U-\id_3 | ^p
        +
        \sum_{i=1}^N
        c_i\bigl( | \overline Ua_i | -\ell_i\bigr)_+^{m_i}
        +
        h(\det\overline  U).
\end{align}

\end{itemize}

\subsection{Additional cofactor-dependent energies}

The following models satisfy H1)--H4) under the corresponding cofactor-growth assumptions.

\begin{itemize}

\item \textbf{Cofactor-regularized Biot-type energy.}
Although the Biot energy is naturally a reduced stretch energy, one may add a
cofactor regularization if one wants to work in the full cofactor-dependent
hypotheses $H1)-H4)$:
\begin{align}
        W_{\rm stretch}\bigl(x,\overline U,\Cof \overline U,\det \overline U\bigr)
        =
        \mu  | \overline U-\id_3 | ^2
        +
        \varepsilon  | \overline U-\id_3 | ^p
        +
        b | \Cof \overline U  -\id_3| ^r
        +
        h(\det \overline U),
\end{align}
where
$
        \mu\geq0,
            $ $
        \varepsilon>0,
            $ $
        b>0,
              $ $
        p\geq2,
          $ $
        r>1,
           $ $
        \frac1p+\frac1r<\frac43.
$
If $b=0$, the model belongs to the reduced class, not to $H1)-H4)$.

\item \textbf{Cofactor-augmented Ogden-inspired energy.}
If a model includes a genuine cofactor contribution, one may take
\begin{align}
        W_{\rm stretch}\bigl(x,\overline U,\Cof \overline U,\det \overline U\bigr)
        =
        \sum_{i=1}^N c_i  | \overline U | ^{m_i}
        +
        b | \Cof \overline U | ^r
        +
        h(\det \overline U),
\end{align}
or
\begin{align}
        W_{\rm stretch}\bigl(x,\overline U,\Cof \overline U,\det\overline  U\bigr)
        =
        \sum_{i=1}^N c_i  | \overline U-\id_3 | ^{m_i}
        +
        b | \Cof \overline U-\id_3 | ^r
        +
        h(\det \overline U),
\end{align}
where
$
        c_i>0,
             $ $
    m_i>1,
         $ $
        b>0,
         $ $
        r>1,
      $ $
        \frac1p+\frac1r<\frac43.
$

\item \textbf{Area-fibre or cofactor-fibre  energies.}
The cofactor variable is natural for modelling area-type responses. Let
$a_1,\ldots,a_N\in\mathbb S^2$. One may consider
\begin{align}
        W_{\rm stretch}\bigl(x,\overline U,\Cof \overline U,\det \overline U\bigr)
        =
        g(x,\overline  U)
        +
        \sum_{i=1}^N b_i | \Cof \overline U\,a_i | ^r
        +
        h(\det \overline U),
\end{align}
where $g(x,\cdot)$ is convex and satisfies
$
        g(x,\overline U)\geq c|\overline U|^p-C.
$
This satisfies $H1)-H4)$ if the directions $a_i$ control the full cofactor,
namely if
$
        \sum_{i=1}^N  | B\,a_i | ^r
        \geq
        c_{\rm cof} | B | ^r
$
for all $B\in  \mathbb{R}^{3\times 3}$, and if
$
        p\geq2,
      $ $
        r>1,
         $ $
        \frac1p+\frac1r<\frac43.
$
If the fibre directions do not control the full cofactor, one should add an
explicit term
$
        b_0 | \Cof \overline U | ^r,
        \  b_0>0.
$

\item \textbf{General separable Cosserat-polyconvex class.}
A broad cofactor-dependent class is
\begin{align}
        W_{\rm stretch}\bigl(x,\overline U,\Cof \overline U,\det \overline U\bigr)
        =
        g_1(x,\overline U)
        +
        g_2(x,\Cof \overline U)
        +
        h(\det \overline U),
\end{align}
where $g_1(x,\cdot)$, $g_2(x,\cdot)$, and $h$ are convex, measurable in $x$,
and satisfy
$
        g_1(x,\overline U)\geq c_{\overline U} | \overline U | ^p-C_{\overline U},
$
$
        g_2(x,\Cof \overline U)\geq c_{\rm cof} | \Cof \overline U | ^r-C_{\rm cof},
$
$
        h(\det \overline U)
        \geq
        c_d(\det \overline U)^s
        +
        \gamma(\det \overline U)^{-\beta}
        -
        C_d,
        \  
        \det \overline U>0.
$
The exponents must satisfy
$
        p\geq2,
         $ $
        r>1,
        $ $
        s>1,
      $ $
        \frac1p+\frac1r<\frac43.
$

\end{itemize}

\end{footnotesize}  

\end{document}